\documentclass{article}

\usepackage{graphicx}%
\usepackage{multirow}%
\usepackage{amsmath,amssymb,amsfonts}%
\usepackage{amsthm}%
\usepackage{mathrsfs}%
\usepackage[title]{appendix}%
\usepackage{xcolor}%
\usepackage{textcomp}%
\usepackage{manyfoot}%
\usepackage{booktabs}%
\usepackage{algorithm}%
\usepackage{algorithmicx}%
\usepackage{algpseudocode}%
\usepackage{listings}%

\usepackage{amsfonts,amssymb}
\usepackage{url}
\usepackage{kbordermatrix}
\usepackage{caption,subcaption}
\usepackage{amsmath,amsfonts,graphicx,tikz,pgf,endnotes,multirow,blkarray,bigdelim,arydshln,scalefnt,verbatim}
\usepackage[dvipdfmx,dvisvgm]{animate}
\usetikzlibrary{decorations.text,shadings,shadows,shadows.blur,shapes.symbols}
\usetikzlibrary[patterns]
%%%%

\usepackage[small,compact]{titlesec}
\usepackage[paper=a4paper,margin=1in]{geometry}
\usepackage{setspace}

%%%%%=============================================================================%%%%
%%%%  Remarks: This template is provided to aid authors with the preparation
%%%%  of original research articles intended for submission to journals published 
%%%%  by Springer Nature. The guidance has been prepared in partnership with 
%%%%  production teams to conform to Springer Nature technical requirements. 
%%%%  Editorial and presentation requirements differ among journal portfolios and 
%%%%  research disciplines. You may find sections in this template are irrelevant 
%%%%  to your work and are empowered to omit any such section if allowed by the 
%%%%  journal you intend to submit to. The submission guidelines and policies 
%%%%  of the journal take precedence. A detailed User Manual is available in the 
%%%%  template package for technical guidance.
%%%%%=============================================================================%%%%

%% as per the requirement new theorem styles can be included as shown below
\newtheorem{theorem}{Theorem}%  meant for continuous numbers
%%\newtheorem{theorem}{Theorem}[section]% meant for sectionwise numbers
%% optional argument [theorem] produces theorem numbering sequence instead of independent numbers for Proposition
\newtheorem{proposition}[theorem]{Proposition}% 

\newtheorem{example}{Example}%
\newtheorem{remark}{Remark}%

\raggedbottom
%%\unnumbered% uncomment this for unnumbered level he

\title{Projective rigidity of point-line configurations in the plane}
\author{Leah Wrenn Berman \and Signe Lundqvist \and Bernd Schulze \and Brigitte Servatius \and Herman Servatius \and Klara Stokes \and Walter Whiteley}
\date{}

\begin{document}
\maketitle
\abstract{In this paper, we develop tools for analyzing incidence-preserving  (or projective) motions of point-line configurations in the real projective plane. We define a real projective variety associated with a rank 3 matroid, that is closely related to the realization space of the matroid. Projective motions are described as curves within this variety, and we introduce the projective rigidity matrix to study these motions. The kernel of this matrix characterizes infinitesimal motions of a configuration, while its co-kernel identifies self-stresses. Whenever part of the matroid corresponds to the points and lines involved in certain projective theorems, such as Desargues' classical theorem, then the rank of the projective rigidity matrix is reduced, enabling the use of our rigidity matrix for the study of such theorems. 
Since symmetry often plays a significant role in the construction of point-line configurations, we also introduce a symmetry-adapted projective rigidity matrix for analyzing symmetry-preserving motions, including Euclidean, projective, and autopolarity-preserving motions. }

\maketitle

\section{Introduction}

The study of realization spaces of matroids is a rich and challenging topic, with deep connections to algebraic geometry, combinatorics, and geometric rigidity. For matroids of rank 3, realization spaces capture the possible geometric arrangements of points and lines in the projective plane consistent with the combinatorial structure of the matroid. These spaces have been a focal point in matroid theory, with applications to for example oriented matroids and problems in computational geometry~\cite{Oriented_matroids,Computational_geometry}. Realization spaces of matroids are also fundamental in optimization: matroid realization spaces are semialgebraic sets, and Mn\"ev's universality theorem states that any semialgebraic set defined over the integers is stably equivalent to the realization space of an oriented matroid~\cite{Mnev}. 
Understanding the realizability of matroids over a given field is a central question, yet it is highly nontrivial. Mnëv's universality theorem 
implies that realization spaces of matroids can exhibit arbitrarily complicated topological and geometric behavior. In a recent paper, it was shown that the matroid realization spaces of $\mathbb{C}$-realizable matroids of rank $3$ with a ground set of size 11 or less is smooth, and that for $n \geq 12$, there is a rank $3$ matroid with ground set of size $n$, such that its realization space over $\mathbb{C}$ has singularities \cite{Smooth_real_spaces}. 

From a geometric perspective, the study of real realizations of rank $3$ matroids can be naturally framed in terms of projective configurations of points and lines in the real projective plane. Projective configurations are classical objects in geometry, with roots in the work of T.~Kirkman, T.~Reye, E.~Steinitz, S.~Cohn-Vossen, and D.Hilbert \cite{hilcohn,pise,coxeter}. These configurations define incidence relations: a point is incident with a line if it lies on the line, and every pair of points spans a line while every pair of lines meets at a point. Such structures are preserved under projective transformations. While simple configurations are easily constructed, more intricate configurations, particularly those with higher incidence counts at each object, pose a significant challenge.

Projective configurations offer a unifying framework for bridging geometric and combinatorial approaches to rank $3$ matroids. In matroid terms, the flats correspond to lines, and the bases are triples of points not all collinear. The realization space of such a matroid can be studied through two algebraic frameworks: the \textit{matroid realization space} and the more recently introduced \textit{slack realization space}. 

A useful computational tool for working with matroid realization spaces is the new computer algebra system OSCAR \cite{OSCAR}. See for example \cite{matroidsoscar} for how OSCAR can be applied to matroid realization spaces.

In this paper, we extend the algebraic and geometric study of realization spaces by introducing new tools that enable the analysis of projective configurations of points and lines through the lens of rigidity theory. 
We define a real projective variety associated with a rank 3 matroid, that contains all points of the classical realization space of the matroid, but also points that correspond to degenerate realizations. Projective motions are described as curves within this variety, and we introduce the {\em projective rigidity matrix} to study these motions. The projective rigidity matrix is itself a real realization of the corresponding {\em projective rigidity matroid} with ground set the set of incidences between the points and the lines of the configuration. 
The projective rigidity matrix provides an efficient linear algebraic framework for studying projective configurations. Specifically:
\begin{itemize}
    \item \textbf{Kernel}: The kernel of the matrix corresponds to \textit{infinitesimal motions} of the configuration, which represent initial velocities of continuous, incidence-preserving deformations.
    \item \textbf{Co-kernel}: The co-kernel identifies \textit{self-stresses}, or row dependencies, which are algebraic signatures of implied incidences. For example, classical incidence theorems like Pappus and Desargues manifest as specific self-stresses.
\end{itemize}

Symmetry, a fundamental aspect of algebraic combinatorics, plays a prominent role in projective configurations. A \textit{symmetry} of a configuration is a projective transformation that preserves the incidence structure, while an \textit{auto-polarity} is a projective polarity that leaves the configuration invariant. Motivated by the question whether symmetry can be the underlying reason of existence of certain projective configurations, we introduce the \textit{projective orbit rigidity matrix}. We prove that the kernel of this matrix consists of the infinitesimal motions that preserve symmetries or auto-polarities, thereby providing a refined tool for studying symmetric configurations.

To illustrate the utility of these matrices, we apply them to various examples, demonstrating their ability to detect rigidity, flexibility, and symmetry-preserving motions in projective configurations. These tools also offer new insights into the realization spaces of matroids, particularly in the presence of symmetry.

The paper is organized as follows. Section~\ref{classical} introduces key concepts in projective geometry and projective configurations. Section~\ref{sec:rig} develops the notion of (continuous and infinitesimal) rigidity for projective configurations. Section~\ref{stress_section} explores self-stresses as row dependencies of the projective rigidity matrix, with connections to classical incidence theorems. Section~\ref{sec:sym} focuses on symmetric configurations and introduces the projective orbit rigidity matrix for studying symmetry-preserving motions. Finally, Section~\ref{sec:con} concludes with a discussion of future directions.

\section{Preliminaries}\label{classical}

Let $PG(\mathbb{R},2)$ be the real projective plane. Denote by $(x:y:z)$ and $(a:b:c)$ the homogeneous coordinates of a point and a line in $PG(\mathbb{R},2)$, respectively. We call points with $z=0$ points at infinity and lines with $c=0$ are called lines through the origin. 
A projective configuration  is a collection of points and lines in $PG(\mathbb{R},2)$, together with the incidence relation defined by symmetrized inclusion. 
This definition includes infinite configurations, but in this work we only consider configurations with a finite number of points and lines. 

Any projective configuration has an underlying combinatorial object $(P,L,I)$ consisting of the set of points $P$, the set of lines $L$ and the incidence relation $I$ between $P$ and $L$. This is an \emph{incidence geometry} of rank 2 since it has two types of objects. Such an incidence geometry $(P,L,I)$ can also be regarded as a bipartite graph with vertex set $P \cup L$ and edge set $I$, or as a hypergraph with vertex set $P$ and hyperedge set $L$. 

Also, given a rank $2$ incidence geometry, the triples of points that are not incident to a common line are the bases of a rank 3 matroid (the two notions of rank are unrelated). 

A \textit{flat} of a matroid with ground set $E$ is a subset $F \subseteq E$  of the ground set such that the rank of $F \cup \{e\}$ is larger than the rank of $F$ for any element $e \in E \setminus F$. One of many cryptomorphic ways to define a matroid is by its set of flats: a family $\mathcal{F}$ of subsets of $E$ is the set of flats of a matroid if  and only if
\begin{enumerate}
    \item $E \in \mathcal{F}$,
    \item If $F_1 \in \mathcal{F}$ and $F_2 \in \mathcal{F}$, then $F_1 \cap F_2 \in \mathcal{F}$, and
    \item If $F \in \mathcal{F}$ and $\{F_1,F_2,...,F_k\}$ is a minimal set of members of $\mathcal{F}$ that properly contain $F$, then the sets $F_1\setminus F$, $F_2\setminus F$,..., $F_k \setminus F$ partition $E \setminus F$. 
\end{enumerate}

The lines of an incidence geometry, and the pairs of points that are not incident to a common line of the incidence gemetry, are the flats of rank 2 of the rank 3 matroid defined by the incidence geometry. See for example one of the textbooks by Oxley or Bj\"orner et al. for more about matroids and matroid realizations \cite{Oriented_matroids,Oxley}.

From a geometric perspective, it is interesting to study the realization spaces of rank $3$ matroids. There are two algebraic models of the realization space.

First, we describe the matroid realization space. Let $M$ be a rank $3$ matroid. Assign the variables $p_i=(x_i,y_i,z_i)$ to the element $p_i$ of the ground set of $M$, and let $[p_ip_kp_j]$ denote the determinant of the $3 \times 3$-matrix with column vectors $p_i$, $p_j$ and $p_k$. The matroid realization space of $M$ is the subset of the Grassmannian of the lines of the projective plane defined by the following:
\[
  \begin{cases}
  [p_ip_kp_j]=0 & \quad \text{if } \{p_i, p_j, p_k \} \text{ is not a basis of } M\\
[p_ip_kp_j] \neq 0  & \quad \text{if } \{p_i, p_j, p_k \} \text{ is a basis of } M.
  \end{cases}
\]
This definition can be found for example in work by Bj\"orner et al. \cite{Oriented_matroids}.

In contrast, the slack realization space, recently introduced by Brandt and Wiebe \cite{Brandt}, is defined from the symbolic slack matrix $S_M(x)$; for a rank $3$ matroid $M$ this is a matrix with rows indexed by the points, and columns indexed by the lines. The entry $S_M(x)_{ij}$ is $0$ if the element $p_i$ lies on the line $l_j$, and a variable $x_{ij}$ if the point $p_i$ does not lie on the line $l_j$. Let $t$ be the number of variables in $S_M(x)$, and let $J$ be the ideal in $\mathbb{R}[x_1,...,x_t]$ defined by the $4 \times 4$-minors of the slack matrix. The slack ideal $I_M$ is the saturation of $J$ with respect to the principal ideal generated by the product of all variables $x_{ij}$. The slack realization space is the variety $V(I_M)$ in $\mathbb{R}^t$ defined by the slack ideal. The matrix $S_M(s)$ realizes the matroid $M$ if and only if $s \in I_M$ \cite{Brandt}.

In the literature, the term combinatorial \emph{$(p_r, l_k)$-configuration} is used to denote a purely combinatorial incidence geometry of rank 2 with the following  properties: (i) there are $p$ ``points" and $l$ ``lines",  (ii) there are positive integers $r$ and $k$ such that $r$ ``lines" go through each ``point", and $k$ ``points" are on each ``line", and (iii) each pair of ``lines" meet in at most one ``point" and each pair of ``points" span at most one ``line". Note that the number of incidences in this case is $|I|=pr=lk$. In the language of hypergraphs, a combinatorial $(p_r, l_k)$-configuration is an $r$-regular and $k$-uniform hypergraph with the property that no two hyperedges share more than one vertex.
If $r=k$, then the configuration is called {\em balanced} (sometimes \emph{symmetric}) and is referred to as a combinatorial \emph{$v_k$ configuration} if it has $v$ points.

The literature of projective configurations is often concerned with questions of existence and constructions. Some combinatorial incidence geometries cannot be realized as points and straight lines in the projective plane in such a way that all points and lines are distinct. Given an incidence geometry, one can therefore ask whether it is realizable. In this paper, we start with a realization of a combinatorial incidence geometry, and investigate the projective rigidity properties of that realization.

 A (projective planar) {\em realization} of an incidence geometry $S=(P,L,I)$ is an assignment of a point $\mathbf{p}_j=(x_j:y_j:z_j)$ in the projective plane to each element $p_j$ of $P$, and an assignment of a line $\mathbf{l}_i=(a_i:b_i:c_i)$ in the projective plane to each element $l_i$ of $L$, such that $\mathbf{l}_i \cdot \mathbf{p}_j=0$ whenever $p_j$ and $l_i$ are incident. The equation $\mathbf{l}_i \cdot \mathbf{p}_j=0$ then encodes that the point $\mathbf{p}_j$ lies on the line $\mathbf{l}_i$. Note that this definition does not exclude that points or lines coincide. 

In Section \ref{sec:sym}, we will be particularly interested in symmetric realizations, which are realizations that are invariant under the action of a symmetry group. The symmetries that we are considering can be either Euclidean symmetries, for example reflections or rotations, or they can be autopolarities.

\section{Rigidity and flexibility of projective configurations}\label{sec:rig}

\label{proj_inf_motions}
 The \textit{realization space} $V(S)$ of an incidence geometry $S=(P,L,I)$ is the space of all collections of assignments $(\mathbf{p}, \mathbf{l})$ of points and lines in the projective plane such that $(S, \mathbf{p}, \mathbf{l})$ is a configuration of points and lines realizing $S$. In other words, the realization space is the real projective variety defined by the $|I|$ polynomials of the form $\mathbf{l}_i \cdot \mathbf{p}_j=0$.
Let $V_{\mathbf{p}, \mathbf{l}}(S)$ be the subvariety of $V(S)$ consisting of the realizations of $S$ that can be obtained from $(S, \mathbf{p}, \mathbf{l})$ by a projective transformation.

Suppose that a rank 3 matroid has as its bases the triples of three points that are not incident to a common line in an incidence geometry. Then the points of the Grassmannian of that matroid, and the points of its slack realization space can be seen as realizations of the incidence geometry as points and lines in $PG(\mathbb{R},2)^2$. However, the realizations in the Grassmannian of that rank $3$ matroid and in the slack realization space are exactly those where a realized point lies on a realized line if, and \textit{only} if, the combinatorial point is incident to the combinatorial line.  

The realization space defined above contains all collections of points and lines that satisfy the combinatorial incidences. Therefore, it can happen that some combinatorial lines or combinatorial points are coincident in the realization, i.e. $\mathbf{l}_j=\mathbf{l}_k$ for some pair of distinct combinatorial lines $l_j$ and $l_k$, or $\mathbf{p}_i=\mathbf{p}_m$ for some pair of distinct combinatorial points $p_i$ and $p_m$. 

We say that a configuration $(S,\mathbf{p}, \mathbf{l})$ is \textit{(projectively) rigid} if all configurations $(S, \mathbf{p'}, \mathbf{l'})$ in a neighborhood of $(S,\mathbf{p}, \mathbf{l})$ in the realization space are in $V_{\mathbf{p}, \mathbf{l}}(S)$. A configuration that is not rigid is said to be \textit{(projectively) flexible}.

There are several other possible definitions of rigidity for projective configurations of points and lines. One natural definition is as follows: A configuration $(S, \mathbf{p}, \mathbf{l})$ is \textit{(projectively) rigid} if all continuous paths $(\mathbf{p}(t), \mathbf{l}(t))$ in the realization space such that $\mathbf{p}(0)=\mathbf{p}$ and $\mathbf{l}(0)=\mathbf{l}$ remain in $V_{\mathbf{p}, \mathbf{l}}(S)$. With this definition, a configuration $(S, \mathbf{p}, \mathbf{l})$ is \textit{flexible} if there is a continuous path $(\mathbf{p}(t),\mathbf{l}(t))$ in $V(S)$ such that $\mathbf{p}(0)=\mathbf{p}$, $\mathbf{l}(0)=\mathbf{l}$, and such that there is some $t \in (0,1]$ for which $(S, \mathbf{p}(t), \mathbf{l}(t))$ is not in $V_{\mathbf{p}, \mathbf{l}}(S)$.

Instead of assuming that the paths in the above definitions are continuous, we can assume that the paths are smooth, or we can assume that they are analytic. The next theorem says that all these possible definitions are equivalent to our original definition.

\begin{theorem}
\label{def_equivalence}
    Let $(S, \mathbf{p}, \mathbf{l})$ be a projective configuration. Then the following are equivalent:

    \begin{enumerate}
        \item[(a)] $(S, \mathbf{p}, \mathbf{l})$ is flexible.
        \item[(b)] There is a continuous path $(\mathbf{p}(t),\mathbf{l}(t))$ in $V(S)$ such that $\mathbf{p}(0)=\mathbf{p}$, $\mathbf{l}(0)=\mathbf{l}$, and such that there is some $t \in (0,1]$ for which $(S, \mathbf{p}(t), \mathbf{l}(t))$ is not in $V_{\mathbf{p}, \mathbf{l}}(S)$.
        \item[(c)] There is a smooth path $(\mathbf{p}(t),\mathbf{l}(t))$ in $V(S)$ such that such that $\mathbf{p}(0)=\mathbf{p}$, $\mathbf{l}(0)=\mathbf{l}$, and such that there is some $t \in (0,1]$ for which $(S, \mathbf{p}(t), \mathbf{l}(t))$ is not in $V_{\mathbf{p}, \mathbf{l}}(S)$.
        \item[(d)] There is an analytic path $(\mathbf{p}(t),\mathbf{l}(t))$ in $V(S)$ such that $\mathbf{p}(0)=\mathbf{p}$, $\mathbf{l}(0)=\mathbf{l}$, and such that there is some $t \in (0,1]$ for which $(S, \mathbf{p}(t), \mathbf{l}(t))$ is not in $V_{\mathbf{p}, \mathbf{l}}(S)$.
    \end{enumerate}
\end{theorem}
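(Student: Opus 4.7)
The plan is to prove the cycle $(a)\Rightarrow(d)\Rightarrow(c)\Rightarrow(b)\Rightarrow(a)$. The arrows $(d)\Rightarrow(c)$ and $(c)\Rightarrow(b)$ are immediate, since every real-analytic map is smooth and every smooth map is continuous.

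For $(a)\Rightarrow(d)$, the key tool is the curve selection lemma from real algebraic geometry. By construction $V(S)$ is a real algebraic variety, cut out by the bilinear equations $\mathbf{l}_i\cdot\mathbf{p}_j=0$, and $V_{\mathbf{p},\mathbf{l}}(S)$ is semi-algebraic, since it is the image of the real algebraic group $PGL(3,\mathbb{R})$ under the polynomial action map $g\mapsto g\cdot(\mathbf{p},\mathbf{l})$. Hence $V(S)\setminus V_{\mathbf{p},\mathbf{l}}(S)$ is semi-algebraic. Statement $(a)$ asserts precisely that $(\mathbf{p},\mathbf{l})$ lies in the Euclidean closure of this semi-algebraic set, so curve selection produces a real-analytic arc $\gamma\colon[0,\varepsilon)\to V(S)$ with $\gamma(0)=(\mathbf{p},\mathbf{l})$ and $\gamma(t)\notin V_{\mathbf{p},\mathbf{l}}(S)$ for every $t\in(0,\varepsilon)$; an analytic rescaling of the time parameter then yields the path required in $(d)$.

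For $(b)\Rightarrow(a)$, I would exploit the $PGL(3,\mathbb{R})$-invariance of $V_{\mathbf{p},\mathbf{l}}(S)$. Every $g\in PGL(3,\mathbb{R})$ acts as a homeomorphism of $V(S)$ stabilising $V_{\mathbf{p},\mathbf{l}}(S)$, and therefore stabilises its topological boundary $\partial V_{\mathbf{p},\mathbf{l}}(S)$ in $V(S)$ as well. Given a continuous path $\gamma$ as in $(b)$, the connected image $\gamma([0,1])$ meets both $V_{\mathbf{p},\mathbf{l}}(S)$ and its complement, so by a standard connectedness argument it must meet $\partial V_{\mathbf{p},\mathbf{l}}(S)$ at some point $\gamma(t^{\ast})$. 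In the case $\gamma(t^{\ast})\in V_{\mathbf{p},\mathbf{l}}(S)$, choose $g\in PGL(3,\mathbb{R})$ with $g\cdot(\mathbf{p},\mathbf{l})=\gamma(t^{\ast})$; then $g^{-1}$ carries $\gamma(t^{\ast})$ to $(\mathbf{p},\mathbf{l})$ while preserving $\partial V_{\mathbf{p},\mathbf{l}}(S)$, so $(\mathbf{p},\mathbf{l})\in\partial V_{\mathbf{p},\mathbf{l}}(S)$, which is exactly $(a)$.

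The main obstacle is the remaining subcase, in which the first boundary point reached by $\gamma$ satisfies $\gamma(t^{\ast})\notin V_{\mathbf{p},\mathbf{l}}(S)$, so there is no single group element sending it to $(\mathbf{p},\mathbf{l})$. To handle this one must again use the semi-algebraic structure: for $t$ slightly less than $t^{\ast}$ one has $\gamma(t)=g(t)\cdot(\mathbf{p},\mathbf{l})$ for some $g(t)\in PGL(3,\mathbb{R})$, and the goal is to pull back non-orbit points close to $\gamma(t^{\ast})$, produced by a second application of curve selection to the semi-algebraic set $V(S)\setminus V_{\mathbf{p},\mathbf{l}}(S)$ at $\gamma(t^{\ast})$, through $g(t)^{-1}$ to points arbitrarily close to $(\mathbf{p},\mathbf{l})$. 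The delicate point is that $g(t)$ may diverge in $PGL(3,\mathbb{R})$ as $t\nearrow t^{\ast}$; controlling this transport, via an analysis of the rate at which the frontier $\overline{V_{\mathbf{p},\mathbf{l}}(S)}\setminus V_{\mathbf{p},\mathbf{l}}(S)$ (a semi-algebraic set of strictly smaller dimension than the orbit) is approached and of the tangent structure of $\overline{V_{\mathbf{p},\mathbf{l}}(S)}$ at $\gamma(t^{\ast})$, is where I expect the technical heart of the argument to lie.
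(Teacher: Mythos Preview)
Your $(d)\Rightarrow(c)\Rightarrow(b)$ and your $(a)\Rightarrow(d)$ via the curve selection lemma match the paper; the only cosmetic difference is that the paper first passes to an affine chart (normalising the last homogeneous coordinate of every point and line to~$1$) before invoking curve selection on the resulting real affine variety.

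For $(b)\Rightarrow(a)$ you are working much harder than necessary, and your proposal is left genuinely incomplete precisely at the step you flag as the ``main obstacle''. The paper's argument here is two lines: pick the largest $t_0$ with $\gamma(t_0)\in V_{\mathbf{p},\mathbf{l}}(S)$, take the single projective transformation $A$ sending $\gamma(t_0)$ to $(\mathbf{p},\mathbf{l})$, and observe that the continuous curve $t\mapsto A\gamma(t)$ for $t\in(t_0,1]$ supplies points of $V(S)\setminus V_{\mathbf{p},\mathbf{l}}(S)$ in every neighbourhood of $(\mathbf{p},\mathbf{l})$. There is no case split and no family of diverging group elements to control. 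Your hard Case~2 only arises because you let $t^{\ast}$ be an \emph{arbitrary} boundary point of the orbit along the curve, which may fall outside $V_{\mathbf{p},\mathbf{l}}(S)$; the paper instead chooses its parameter value inside the orbit from the outset, so that one fixed $A\in PGL(3,\mathbb{R})$ does the transport. The second application of curve selection, the analysis of $g(t)\to\infty$, and the tangent-structure considerations you outline are all unnecessary for this implication. (Your instinct that something deserves checking is not baseless---the existence of a ``largest $t_0$'' is asserted rather than proved in the paper---but if one wants to be careful the natural fix is to argue that $V_{\mathbf{p},\mathbf{l}}(S)$ is relatively closed in $V(S)$ near the path, not to chase diverging transformations.)
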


\begin{proof}
    Clearly $(d)$ implies $(c)$ and $(c)$ implies $(b)$. 

    Now, if $(b)$ holds, then there is some largest $t_0$ so that  $(\mathbf{p}(t_0), \mathbf{l}(t_0))$ is in $V_{\mathbf{p}, \mathbf{l}}(S)$. By definition, there is a projective transformation $A$ that takes $(S, \mathbf{p'}, \mathbf{l'})$ to $(S, \mathbf{p}, \mathbf{l})$. Every neighborhood of $(S, \mathbf{p}, \mathbf{l})$ intersects the curve $(S, A\mathbf{p}(t), A\mathbf{l}(t))$, for $t \in (t_0, 1]$, and every such intersection necessarily contains configurations not in $V_{\mathbf{p},\mathbf{l}}(S)$, by definition of $t_0$. It follows that $(S, \mathbf{p}, \mathbf{l})$ is flexible.

    It remains to show that $(a)$ implies $(d)$. If $(S, \mathbf{p}, \mathbf{l})$ is not rigid, then there is some realization $(S, \mathbf{p'}, \mathbf{l'})$ in $U \cap (V(S) \setminus V_{\mathbf{p},\mathbf{l}}(S)$, where $U$ is some neighborhood of $(S, \mathbf{p}, \mathbf{l})$ in $V(S)$. Without loss of generality, we can assume that both realizations have no points at infinity or lines that go through the origin.

    The realizations that have only finite points, and only lines that do not go through the origin correspond to points on the real affine variety defined by a polynomial of the form
\begin{equation}
    a_i x_j + b_i y_j + 1=0
\end{equation}
for each incidence $(p_j, l_i)$.
    
    By [\cite{alg_approx_of_curves}, Lemma 18.3], there are analytic curves $\mathbf{p}(t)$ and $\mathbf{l}(t)$ in this real affine variety such that $\mathbf{p}(0)=\mathbf{p}$ and $\mathbf{p}(1)=\mathbf{p'}$, and $\mathbf{l}(0)=\mathbf{l}$ and $\mathbf{l}(1)=\mathbf{l'}$. The curves $\mathbf{p}(t)$ and $\mathbf{l}(t)$ lift to analyic curves in $V(S)$ such that $z(t)$ and $c(t)$ are constant for all points and lines respectively.
    Since $(S, \mathbf{p'}, \mathbf{l'})$ cannot be obtained from $(S, \mathbf{p}, \mathbf{l})$ by a projective transformation, it follows that $(a)$ implies $(d)$.
\end{proof}

\begin{remark}
    The motions of interest in this paper are the motions that preserve incidences of points and lines modulo projective transformations. {\em Pinning} is a technique that is useful to illustrate the congruence classes of motions. When we pin a configuration we fix four points in general position, that is, in a position such that no three of the points are collinear. Pinning a configuration will leave only the motions that we are interested in.
    We refer the reader to~\cite{fields} for more details.
\end{remark}

The geometric realization of a finite line $l_i$ that does not go through the origin is given by its homogeneous coordinates
$\mathbf{l}_i=(a_i\colon b_i\colon 1)$, and the realization of a finite point $p_j$ is given by $\mathbf{p}_j=(x_j\colon y_j\colon 1)$. The constraint equation 
\begin{equation}
    a_i x_j + b_i y_j + 1=0
    \label{incidence_eq}
\end{equation}
expresses that the point $p_j$ and the line $l_i$ are incident. Finding a configuration of points and lines realizing a given incidence geometry amounts to solving $|I|$ equations of this form.

Suppose we have a path in the realization space, and so regard the vectors $\mathbf{p_j}$ and $\mathbf{l_i}$ as differentiable functions of parameter $t$, say time, whose initial position
corresponds to an initial configuration. 
Since the constraint equations must be satisfied for all values of $t$, the Jacobian of Equation (\ref{incidence_eq}) gives a linear equation
\begin{equation}
    \mathbf{p}_j\cdot \Delta \mathbf{l}_i + \mathbf{l}_i\cdot \Delta \mathbf{p}_j=0
    \label{Jacobian_eq}
\end{equation}
for each incidence $(p_j, l_i) \in I$. The \textit{projective rigidity matrix} is the coefficient matrix $M(S, \mathbf{p}, \mathbf{l})$ of the system of equations of the form (\ref{Jacobian_eq}). The row of the projective rigidity matrix that corresponds to the incidence $(p_j, l_i)$ looks as follows:
\[
\left[\begin{array}{ c ccccc ccccc }
  &0&\dots  & x_j \ y_j &\dots&0&\ldots& a_i\  b_i &\ldots&0& 
\end{array}\right],
\]

A \emph{(projective) infinitesimal motion} of a configuration $(S,\mathbf{p}, \mathbf{l})$ is an element of the kernel of $M(S, \mathbf{p}, \mathbf{l})$. Linearizations of projective transformations are infinitesimal motions of all configurations of points and lines.  We say that a configuration is \textit{(projectively) infinitesimally rigid} if all its infinitesimal motions are linearizations of projective transformations. A configuration that is not infinitesimally rigid is \textit{(projectively) infinitesimally flexible}. The following relation between rigidity and infinitesimal rigidity holds:

\begin{theorem}[\cite{fields}]
    If a configuration is projectively infinitesimally rigid, then it is projectively rigid.
    \label{inf_rigid_implies}
\end{theorem}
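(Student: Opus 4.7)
The plan is to prove the contrapositive: assuming $(S, \mathbf{p}, \mathbf{l})$ is flexible, I will construct an infinitesimal motion that is not the linearization of a projective transformation. By Theorem~\ref{def_equivalence} I may pass from flexibility directly to an \emph{analytic} path $(\mathbf{p}(t), \mathbf{l}(t))$ in $V(S)$ with $(\mathbf{p}(0), \mathbf{l}(0)) = (\mathbf{p}, \mathbf{l})$ and some $t_1 \in (0, 1]$ at which the path lies outside $V_{\mathbf{p}, \mathbf{l}}(S)$. Analyticity will be essential, because it lets us work with Taylor coefficients rather than merely with first derivatives.

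Differentiating each incidence constraint $\mathbf{p}_j(t)\cdot \mathbf{l}_i(t) = 0$ at $t=0$ reproduces the Jacobian equation~(\ref{Jacobian_eq}), so $(\dot{\mathbf{p}}(0), \dot{\mathbf{l}}(0))$ is automatically in the kernel of the projective rigidity matrix, i.e., an infinitesimal motion. If this motion is not a linearization of a projective transformation we are finished. Otherwise, pick an analytic one-parameter family $A_s \in \mathrm{PGL}(3,\mathbb{R})$ with $A_0 = \mathrm{id}$ whose linearization at $s=0$ matches $(\dot{\mathbf{p}}(0), \dot{\mathbf{l}}(0))$, and replace the original path by $A_t^{-1}(\mathbf{p}(t), \mathbf{l}(t))$: this is still analytic, still lies in $V(S)$, still leaves $V_{\mathbf{p}, \mathbf{l}}(S)$, but now has vanishing first derivative at $t=0$. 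Applying Leibniz to the bilinear incidence constraints at order $k$, one checks that all cross terms vanish because the derivatives of orders $1$ through $k-1$ are zero, so the first non-vanishing Taylor coefficient $c_k$ again satisfies the linear rigidity equations and is an infinitesimal motion. If $c_k$ is not a projective linearization we are done; otherwise we compose with another analytic family of projective transformations to push the order of vanishing higher and repeat.

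The main obstacle is making this iteration terminate. One must argue that if, after every correction, the next non-vanishing Taylor coefficient remains a linearization of a projective transformation, then the original path already lay entirely within $V_{\mathbf{p}, \mathbf{l}}(S)$, contradicting flexibility. The cleanest way to close this is to exploit that $V_{\mathbf{p}, \mathbf{l}}(S)$ is a smooth $\mathrm{PGL}(3,\mathbb{R})$-orbit, so the successive corrections can be assembled, by an analytic implicit-function-type argument, into a single analytic family $A(t)$ of projective transformations with $A(t)^{-1}(\mathbf{p}(t), \mathbf{l}(t)) \equiv (\mathbf{p}, \mathbf{l})$ near $t=0$. This orbit-level statement is the technical heart of the proof, and it is precisely where the analyticity provided by Theorem~\ref{def_equivalence} does the real work that mere smoothness or continuity would not.
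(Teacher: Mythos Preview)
The paper does not actually prove this theorem: it is stated with a citation to \cite{fields} and no argument is given in the present paper. So there is no ``paper's own proof'' to compare against here.

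That said, your outline is more elaborate than the situation requires, and the part you flag as the ``technical heart'' is a genuine gap rather than a routine step. Showing that an infinite sequence of order-by-order projective corrections can be assembled into a single analytic family $A(t)$ with $A(t)^{-1}(\mathbf{p}(t),\mathbf{l}(t))\equiv(\mathbf{p},\mathbf{l})$ is not a consequence of analyticity alone; one needs control on the corrections (e.g.\ via an actual implicit-function or Artin-approximation argument), and you have not supplied that.

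More to the point, none of this machinery is needed. Infinitesimal rigidity says the Jacobian $M(S,\mathbf{p},\mathbf{l})$ of the defining bilinear equations has rank $2|P|+2|L|-8$, hence maximal. By the constant-rank (implicit function) theorem, $V(S)$ is a smooth $8$-dimensional manifold in a neighbourhood of $(\mathbf{p},\mathbf{l})$. The $\mathrm{PGL}(3,\mathbb{R})$-orbit $V_{\mathbf{p},\mathbf{l}}(S)$ is already an $8$-dimensional smooth submanifold of $V(S)$ through that point, so the two must coincide locally, which is exactly projective rigidity. This is the standard Asimow--Roth style argument and is almost certainly what \cite{fields} does; your Taylor-coefficient iteration is the kind of tool one reaches for in the opposite (and harder) direction, where the Jacobian does \emph{not} have maximal rank.
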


As the kernel of $M(S, \mathbf{p}, \mathbf{l})$ is at least 8-dimensional, it follows that an infinitesimally rigid configuration must satisfy $|I|\geq 2|L|+2|P|-8$. A configuration that is \emph{minimally infinitesimally rigid} in the sense that removing any incidence makes the configuration infinitesimally flexible, must satisfy 
\begin{eqnarray}
    |I| = 2|L|+2|P|-8 \text{ and} \label{min_rig}\\
    |I'| \leq 2|L(I')|+2|P(I')|-8,
    \label{count_indep}
\end{eqnarray}
where the inequality must hold for all non-empty subsets of incidences $I' \subseteq I$. A configuration such that the rows of $M(S, \mathbf{p}, \mathbf{l})$ are linearly independent has to satisfy (\ref{count_indep}). Such configurations are said to be \emph{independent}.

\begin{example}[The Desargues configuration]
\label{Desargues_ex}
    Desargues's theorem states that two triangles are perspective from a point if and only if they are perspective from a line. 
    The Desargues configuration, see Figure \ref{Desargues}, occurs as a consequence of Desargues's theorem. The points of the Desargues configuration are the vertices of two triangles, $a$, $b$ and $c$ and $a'$, $b'$ and $c'$ respectively, as well as the a point of perspective $p$ of the two triangles and the points of intersection of the lines $ab$ and $a'b'$, $ac$ and $a'c'$ and $bc$ and $b'c'$. The points of intersection will be collinear as a consequence of Desargues's theorem. 

    The Desargues configuration has $10$ points, $10$ lines and $30$ incidences. So, $|I|=2|P|+2|L|-10$, which means that the Desargues configuration is expected to have a two-dimensional space of motions, excluding the projective transformations.

    However, suppose that the points $a$, $b$, $c$ and $p$ are fixed. Then the points $a'$, $b'$ and $c'$ can be moved independently along the lines $ap$, $bp$ and $cp$ respectively, and the three remaining points will stay collinear as a consequence of Desargues's theorem. Hence the Desargues configuration has a three-dimensional space of motions, rather than the expected two-dimensional space of motions.
\end{example}

\begin{figure}
    \centering
    \begin{tikzpicture}
    \filldraw[black] (1/4,1/4) circle (2.4pt);
    \filldraw[black] (-1/4,7/4) circle (2.4pt);
    \filldraw[black] (-3/4,13/4) circle (2.4pt);
    \filldraw[black] (1/4,5/4) circle (2.4pt);
    \filldraw[black] (1/4,11/4) circle (2.4pt);
    \filldraw[black] (4/4,7/4) circle (2.4pt);
    \filldraw[black] (6/4,11/4) circle (2.4pt);
    \filldraw[black] (24/4,7/4) circle (2.4pt);
    \filldraw[black] (10/4,11/4) circle (2.4pt);
    \filldraw[black] (-11/4,17/4) circle (2.4pt);

    \draw (1/4,1/4) -- (-3/4,13/4);
    \draw (1/4,1/4) -- (1/4,11/4);
    \draw (1/4,1/4) -- (6/4,11/4);
    \draw (1/4,5/4) -- (-11/4,17/4);
    \draw (1/4,11/4) -- (-11/4,17/4);
    \draw (-11/4,17/4) -- (24/4,7/4);
    \draw (-3/4,13/4) -- (24/4,7/4);
    \draw (-1/4,7/4) -- (24/4,7/4);
    \draw (1/4,5/4) -- (10/4,11/4);
    \draw (1/4,11/4) -- (10/4,11/4);

    \node[anchor=north] at (1/4,1/4) {$p$};
    \node[anchor=west] at (1/4,5/4) {$a'$};
    \node[anchor=north] at (0.42,11/4) {$a$};
    \node[anchor=north] at (-3/4,13/4) {$b$};
    \node[anchor=north] at (6/4,11/4) {$c$};
    \node[anchor=north] at (-1/4,7/4) {$b'$};
    \node[anchor=north] at (4/4,7/4) {$c'$};
    
\end{tikzpicture}
    \caption{The $10_3$ Desargues configuration.}
    \label{Desargues}
\end{figure}
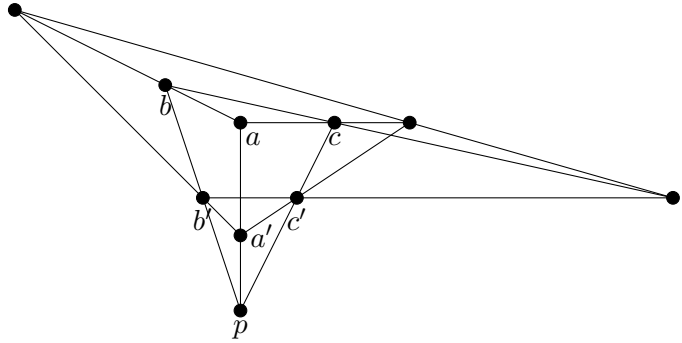

The Desargues configuration is an example where counting the points, lines and incidences does not predict the correct number of motions, which means that the natural sparsity counts of Equations~(\ref{min_rig}) and (\ref{count_indep}) are not sufficient for characterizing independence in the rigidity matrix and minimal rigidity. 

Note that it is a theorem in projective geometry that causes the discrepancy between the sparsity counts and the rank of the rigidity matrix. In the Desargues configuration, one of the incidences is implied by Desargues's theorem, which causes the Desargues configuration to be more flexible than expected.

We conjecture that this is part of a more general phenomenon, namely that for projective theorems regarding point-line configurations, where certain incidences imply others, these statements can be formulated in terms of the projective rigidity matrix having a non-trivial row dependence. More evidence for this conjecture can be found in \cite{fields}. Row dependencies, or self-stresses, are the subject of Section \ref{stress_section} in this paper.

We conclude this section with an example of an infinitesimally rigid  configuration.

\begin{figure}[htp]
\centering
\includegraphics[scale=1.2]{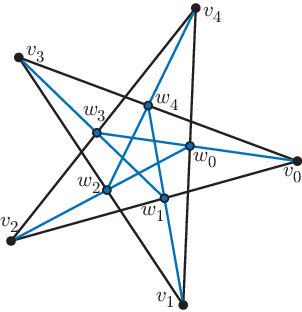}
\caption{A cyclic $10_3$-configuration.}
\label{Mr10}
\end{figure}

\begin{example}
\label{Rigid_ex}
A combinatorial $v_k$-configuration  is \textit{cyclic} if there is a cyclic permutation of the points that sends the configuration to itself. The configuration shown in Figure~\ref{Mr10} is a cyclic $10_3$ configuration with $10$ points and $10$ lines (non-isomorphic to the Desargues configuration).  It can be drawn with 5-fold rotational symmetry and two symmetry classes of points and lines. Geometric configurations with this property are sometimes called  \emph{astral configurations} \cite[Chapter 3]{Gru2009b}.  
This particular configuration can be constructed as follows \cite{BerDeOFau2020}:
\begin{enumerate}
\item Construct points $v_0, \ldots, v_4$ as the vertices of a regular $5$-gon centered at $\mathcal{O}$. Typically we choose $\mathcal{O} = (0,0)$, and let $v_i = (\cos(2 \pi i/5), \sin(2 \pi i/5))$.
\item Construct lines $l_i = v_iv_{i+2}$.
\item Construct a circle $\mathcal{C}$ passing through the points $v_1$, $\mathcal{O}$, and $v_{-1}$, and let $w_0$ be one of the two intersections of $\mathcal{C}$ and $l_0$. 
\item Construct $w_i$ as the rotation of $w_0$ by $\frac{2\pi i}{5}$ about $\mathcal{O}$. 
\item Construct $m_i = w_iw_{i+2}$.
\end{enumerate}
The Configuration Construction Lemma (see \cite{BerDeOFau2020}) can be used to show that since $w_0$ was constructed as the intersection of that particular circle with that particular line, each line $m_i$ passes through the point $v_{i+2}$.

Now the $20_4$-configuration in Figure \ref{Mr20} can be constructed from the $10_3$-configuration in Figure \ref{Mr10} as follows:

    \begin{enumerate}
        \item Construct a circle $\mathcal{C}'$ passing through the points $v_1$, $\mathcal{O}$, and $v_{3}$, and let $w_0'$ be one of the two intersections of $\mathcal{C}$ and $l_0$.
        \item Construct $w_i'$ as the rotation of $w_0'$ by $\frac{2\pi i}{5}$ about $\mathcal{O}$.
        \item Construct lines $n_i=w_i'w_{i+2}'$, which will pass through $v_{i+3}$.
        \item Construct the points $u_i$ as the intersections of the lines $m_i$ and $n_i$. 
        \item Construct the lines $o_i=u_iu_{i+1}$, which will pass through the points $w_{i+1}'$ and $w_{i+3}$.
    \end{enumerate}

    The $20_4$ configuration constructed this way is projectively infinitesimally rigid (as can be checked by computing the rank of the projective rigidity matrix), and therefore rigid, by Theorem \ref{inf_rigid_implies}.  %\textcolor{red}{Do we want to say more about the computer verification?}
\end{example}

\begin{figure}[htb]
\centering
\includegraphics[scale=1.1]{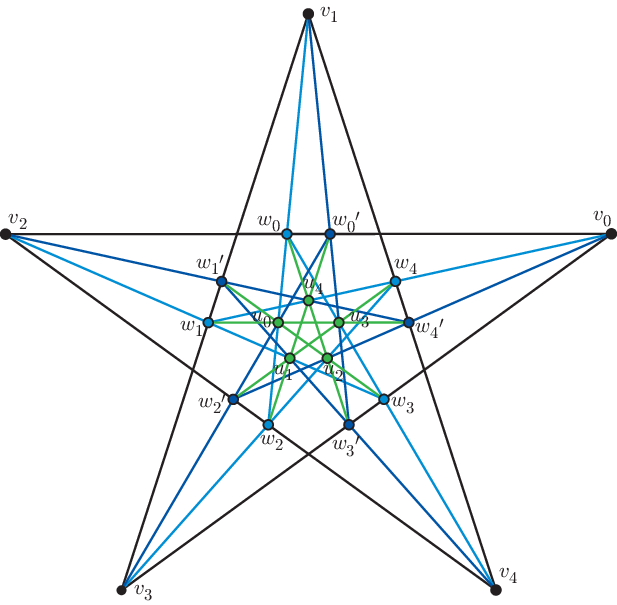}
\caption{A cyclic $20_4$-configuration.}
\label{Mr20}
\end{figure}

\section{Self-stresses of projective configurations}
\label{stress_section}

In this section, we are going to introduce the theory of row dependencies, or self-stresses. The theory of row dependencies is a dual theory of column dependencies (infinitesimal motions).

Recall that the vectors $\mathbf{p}_i$ and $\mathbf{l}_j$ are the homogeneous coordinates assigned to the point $p_i$ and $l_j$ in a configuration realizing $S$. In this section, we want to consider $\mathbf{p}_i$ and $\mathbf{l}_j$ as vectors in $\mathbb{R}^3$. In order to do this, we pick the representatives of $\mathbf{p}_i$ and $\mathbf{l}_j$ such that the last non-zero coordinate is 1. In this section, we do not assume that the points are finite and that the lines do not go through the origin. However, for finite points and lines that do not go through the origin, we pick the same representatives that we used to define the rigidity matrix.

A \textit{stress} of a realization of an incidence geometry $S=(P,L,I)$ is a scalar $\omega_{i,j}$ assigned to each incidence $(p_i,l_j) \in I$. For technical  reasons we define the stress to be zero for all non-incident pairs. We say that a stress is an \emph{equilibrium stress} (or \emph{self-stress}) if the following equations hold for each point $p_i \in P$ and line $l_j \in L$:

\begin{equation}
\sum_{l_j \in L}\omega_{i,j}\mathbf{l}_j = \mathbf{0},
\qquad
\sum_{p_i \in P}\omega_{i,j}\mathbf{p}_i = \mathbf{0}.
\label{stress_equations}
\end{equation}

Note that as the scalar $\omega_{i,j}$ is defined to be zero for all non-incident pairs of points and lines, the summands of the sum $\sum_{l_j \in L}\omega_{i,j}\mathbf{l}_j$ are non-zero only for the lines $l_j$ such that $(p_i, l_j) \in I$. Similarly, the summands in the sum $\sum_{p_i \in P}\omega_{i,j}\mathbf{p}_i = \mathbf{0}$ are non-zero only for the points $p_i$ such that $(p_i, l_j) \in I$.

Note that the existence of an equilibrium stress does not depend on the chosen representatives of $\mathbf{p}_i$ and $\mathbf{l}_j$. However, the coefficients $\omega_{i,j}$ do, so for the coefficients to be well-defined we choose to fix specific representatives of $\mathbf{p}_i$ and $\mathbf{l}_j$.

As the next proposition shows, existence of an equilibrium stress is also invariant under projective transformations.

\begin{proposition}
    \label{proj_eq_stress}
    If a configuration realizing an incidence geometry $S=(P,L,I)$ has an equilibrium stress, then any projectively equivalent configuration realizing $S$ also has an equilibrium stress. 
\end{proposition}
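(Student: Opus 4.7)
The plan is to unwind what it means for a configuration to be projectively equivalent to another, and then apply linearity of the stress equations. A projective transformation is given by a matrix $A \in GL_3(\mathbb{R})$ acting on a point representative by $\mathbf{p}_i \mapsto A\mathbf{p}_i$. For the incidence relation $\mathbf{l}_j \cdot \mathbf{p}_i = 0$ to be preserved, the transformation must act on a line representative by $\mathbf{l}_j \mapsto A^{-T}\mathbf{l}_j$. So if $(S, \mathbf{p}, \mathbf{l})$ and $(S, \mathbf{p}', \mathbf{l}')$ are projectively equivalent via $A$, then $\mathbf{p}'_i$ and $A\mathbf{p}_i$ are equal up to scalar, and similarly $\mathbf{l}'_j$ and $A^{-T}\mathbf{l}_j$ are equal up to scalar.

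Next I would exploit the observation made just before the statement of the proposition: the existence of an equilibrium stress does not depend on the particular choice of homogeneous representatives. This gives the freedom to choose convenient representatives for the transformed configuration. I would choose $\tilde{\mathbf{p}}'_i := A\mathbf{p}_i$ and $\tilde{\mathbf{l}}'_j := A^{-T}\mathbf{l}_j$, which are valid (non-zero) representatives of $\mathbf{p}'_i$ and $\mathbf{l}'_j$, even if they do not satisfy the last-nonzero-coordinate-equals-$1$ normalization.

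Let $\omega$ be an equilibrium stress of $(S, \mathbf{p}, \mathbf{l})$ with the given representatives. I claim that the same scalars $\omega_{i,j}$ form an equilibrium stress of $(S, \tilde{\mathbf{p}}', \tilde{\mathbf{l}}')$. Indeed, using linearity of $A$ and $A^{-T}$ together with the hypothesis \eqref{stress_equations}, for each line $l_j$,
\begin{equation*}
\sum_{p_i \in P} \omega_{i,j}\, \tilde{\mathbf{p}}'_i \;=\; \sum_{p_i \in P} \omega_{i,j}\, A\mathbf{p}_i \;=\; A \sum_{p_i \in P} \omega_{i,j}\, \mathbf{p}_i \;=\; A\mathbf{0} \;=\; \mathbf{0},
\end{equation*}
and analogously for each point $p_i$, $\sum_{l_j} \omega_{i,j}\, \tilde{\mathbf{l}}'_j = A^{-T} \sum_{l_j} \omega_{i,j}\, \mathbf{l}_j = \mathbf{0}$. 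Thus $(S, \mathbf{p}', \mathbf{l}')$ admits an equilibrium stress with respect to the representatives $\tilde{\mathbf{p}}'_i, \tilde{\mathbf{l}}'_j$. By the representative-independence of existence, an equilibrium stress also exists with respect to the normalized representatives, and this proves the proposition.

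I do not expect a real obstacle here; the only subtle point is remembering that lines transform contragrediently to points, and that the representatives $A\mathbf{p}_i$ and $A^{-T}\mathbf{l}_j$ need not obey the last-coordinate-equals-$1$ normalization, so one must explicitly invoke the already-noted fact that existence of a stress is independent of the choice of representatives (alternatively, one could rescale and replace $\omega_{i,j}$ by $\lambda_i \mu_j \omega_{i,j}$ for suitable scalars, but this is unnecessary overhead).
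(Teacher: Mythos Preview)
Your proof is correct and follows essentially the same approach as the paper: both apply the linear map defining the projective transformation and use linearity of the stress equations. The paper explicitly rescales the stress coefficients to $\omega'_{i,j}=\lambda_i\lambda_j\omega_{i,j}$ so as to work directly with the normalized representatives, whereas you keep the original coefficients and instead invoke the already-stated representative-independence of the existence of a stress; you even note the paper's variant as an alternative in your final remark. Your version is arguably slightly cleaner, and you are more careful than the paper in making explicit that lines transform contragrediently by $A^{-T}$ rather than by $A$.
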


\begin{proof}
    Suppose that $(S, \mathbf{p}, \mathbf{l})$ is a configuration that has an equilibrium stress, where $\omega_{i,j}$ is the stress coefficient assigned to the incidences $(p_i,l_j) \in I$. Let $(S, \mathbf{p'}, \mathbf{l'})$ be the realization of $S$ obtained from $(S, \mathbf{p}, \mathbf{l})$ by a projective transformation represented by the matrix $A$. 

    Note that $A\mathbf{p}_i=\lambda_i\mathbf{p}'_i$, where $\mathbf{p}'_i$ is the representative of the homogeneous coordinates of $\mathbf{p}'_i$ such that the last non-zero coordinate is 1. Similarly, $A\mathbf{l}_j=\lambda_j\mathbf{l}'_j$. 

    Define new stress coefficients by $\omega_{i,j}'=\lambda_i \lambda_j \omega_{i,j}$ for each incidence $(p_i,l_j)$. Now consider the sum $\Sigma_{l_j \in L} \omega'_{i,j} \mathbf{l}'_j$. By definition, $$\Sigma_{l_j \in L} \omega'_{i,j} \mathbf{l}'_j=\Sigma_{l_j \in L} \lambda_i \lambda_j \omega_{i,j} \mathbf{l}'_j$$ and by assumption, $$\Sigma_{l_j \in L} \lambda_i \lambda_j \omega_{i,j} \mathbf{l}'_j= \lambda_i A( \Sigma_{l_j \in L}  \omega_{i,j} \mathbf{l}_j)= \mathbf{0}.$$ Similarly, $$\Sigma_{p_i \in P} \omega'_{i,j} \mathbf{p}'_i=\lambda_j A( \Sigma_{p_i \in P}  \omega_{i,j} \mathbf{p}_i)= \mathbf{0}.$$ 
    Hence $\omega'_{i,j}$ is an equilibrium stress on the realization $(S, \mathbf{p'}, \mathbf{l'})$.
\end{proof}

Suppose that all points are finite, so that $\mathbf{p}_i=(x_i:y_i:1)$ for all points $p_i \in P$, and no lines go through the origin, so that $\mathbf{l}_j=(a_j:b_j:1)$ for all lines $l_j \in L$. Under these assumptions, the rigidity matrix is well-defined, and the existence of an equilibrium stress clearly implies a row-dependence in the matrix $M(S,\mathbf{p}, \mathbf{l})$. 

On the other hand, if there is a row-dependence in the rigidity matrix, then there are scalars $\omega_{i,j}$ so that the equations 

\begin{equation}
\sum_{l_j \in L}\omega_{i,j}(a_j,b_j) = \mathbf{0},
\qquad
\sum_{p_i \in P}\omega_{i,j}(x_i,y_i) = \mathbf{0}
\label{stress2}
\end{equation}
hold for all points $p_i \in P$ and lines $l_j \in L$. Equation (\ref{stress_equations}) is satisfied for a point $p_i$ if Equation (\ref{stress2}) is satisfied, and the sum $\Sigma_{l_j: (p_i,l_j) \in I}w_{i,j}$, which gives the third coordinate of Equation (\ref{stress_equations}), is zero. Dually, for lines, Equation (\ref{stress2}) has to hold, and the sum  $\Sigma_{p_i: (p_i,l_j) \in I}w_{i,j}$ has to be 0. The equations $\sum_{l_j \in L: (p_i, l_j) \in I} \omega_{i,j} = 0$ and $\sum_{p_i \in P: (p_i, l_j) \in I} \omega_{i,j} = 0$ say that,
for a configuration placed with no points at infinity or lines through the origin, every self-stress $\omega$ must be purely combinatorial, that is,
it must be a row dependence of the combinatorial incidence matrix.

Now, note that $(a_j,b_j) \cdot (x_i,y_i)=-1$ for all pairs $(p_i,l_j) \in I$. It follows that 
$$(\sum_{l_j \in L}\omega_{i,j}(a_j,b_j)) \cdot (x_i,y_i) = - \Sigma_{l_j : (p_i,l_j) \in I} \omega_{i,j} = \mathbf{0}.$$
The dual statement holds for lines. Hence, a row dependence in the rigidity matrix implies that the configuration has an equilibrium stress. 

The row space of the matrix is the set of vectors of the form
$$\sum_{i,j}\omega_{i,j} Row_{i,j},$$
for any scalars $\omega_{i,j}$.

The  conditions that the row space of $M(S,\mathbf{p}, \mathbf{l})$ has rank
$2|P| + 2|L| - 8$, i.e. the configuration is infinitesimally rigid,
is equivalent to the condition of the configuration being {\em statically rigid}, which says that
the vectors of the from
$\omega M(S,\mathbf{p}, \mathbf{l})$ span the entire orthogonal complement of the
space of trivial motions.

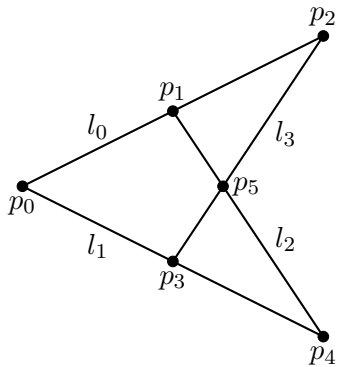
\begin{figure}
\begin{center}
\begin{tikzpicture}
\filldraw[black] (0,0) circle (2pt);
\filldraw[black] (2,1) circle (2pt);
\filldraw[black] (4,2) circle (2pt);
\filldraw[black] (2,-1) circle (2pt);
\filldraw[black] (4,-2) circle (2pt);
\filldraw[black] (8/3,0) circle (2pt);

\draw[thick] (0,0)--(4,2);
\draw[thick] (0,0)--(4,-2);
\draw[thick] (2,1)--(4,-2);
\draw[thick] (2,-1)--(4,2);

\node[anchor=north] at (0,0) {$p_0$};
\node[anchor=south] at (2,1) {$p_1$};
\node[anchor=south] at (4,2) {$p_2$};
\node[anchor=north] at (2,-1) {$p_3$};
\node[anchor=north] at (4,-2) {$p_4$};
\node[anchor=west] at (8/3,0) {$p_5$};
\node[anchor=south] at (1,1/2) {$l_0$};
\node[anchor=north] at (1,-1/2) {$l_1$};
\node[anchor=north] at (3.5,1) {$l_3$};
\node[anchor=south] at (3.5,-1) {$l_2$};
\end{tikzpicture}
\end{center}
\caption{The complete quadrilateral.}
\label{quadrilateral}
\end{figure}

\begin{example}[The complete quadrilateral]
\label{quadrilateral_ex}
    A complete quadrilateral is a configuration with $4$ lines and their $6$ intersection points, see Figure \ref{quadrilateral}. We want to find homogeneous coordinates for the points and lines of a self-stressed complete quadrilateral.

    Every point of the complete quadrilateral is incident to exactly two lines. If the point $p_i$ lies on the lines $l_j$ and $l_k$, then $\omega_{i,j}\mathbf{l}_j+\omega_{i,k}\mathbf{l}_k=0$. As we are interested in the case where $\omega_{j,i}$ and $\omega_{i,k}$ are non-zero, we must have that $\mathbf{l}_j$ is a scalar multiple of $\mathbf{l}_k$. Geometrically, this means that $\mathbf{l}_j$ and $\mathbf{l}_k$ are the same line. Because the complete quadrilateral is connected, this implies that all lines have to be the same in a self-stressed realization. 
    
    Furthermore, if $l_j$ and $l_k$ are incident to the point $p_i$, we get that $\omega_{i,j}=-\omega_{i,k}$. 

    In this case, this means that $\omega_{0,0}=-\omega_{0,1}$, $\omega_{1,0}=-\omega_{1,2}$, $\omega_{2,0}=-\omega_{2,3}$, $\omega_{3,1}=-\omega_{3,3}$, $\omega_{4,1}=-\omega_{4,2}$ and $\omega_{5,2}=-\omega_{5,3}$. 

    Now, if we restrict to configurations such that the points are finite and the lines do not go through the origin, then we can use that $\Sigma_{l_j: (p_i,l_j) \in I}w_{i,j}=0$ for all lines $l_j$. Together with the above observation that $\omega_{i,j}=-\omega_{i,k}$ for $p_i$ incident to $l_j$ and $l_k$, we get the following four equations
    \begin{eqnarray*}
        \omega_{0,0}+\omega_{1,0}+\omega_{2,0}=0 \\
        -\omega_{0,0}+\omega_{3,1}+\omega_{4,1}=0 \\
        -\omega_{1,0} -\omega_{4,1} + \omega_{5,2} =0 \\
        \omega_{2,0}+\omega_{3,1}+\omega_{5,2}=0
    \end{eqnarray*}
    where the equations correspond to $l_0$, $l_1$, $l_2$ and $l_3$ respectively. It is easily verified that one solution to this system of equations is $\omega_{0,0}=1$, $\omega_{1,0}=1$, $\omega_{2,0}=-2$, $\omega_{3,1}=-1$, $\omega_{4,1}=2$ and $\omega_{5,2}=3$.

    To find a self-stressed realization of the complete quadrilateral with the above coefficients, we need a solution to the following four equations corresponding to the lines 

    \begin{eqnarray*}
        \mathbf{p}_0+\mathbf{p}_1-2\mathbf{p}_2=0 \\
        -\mathbf{p}_0-\mathbf{p}_3+2\mathbf{p}_4=0 \\
        -\mathbf{p}_1 -2\mathbf{p}_4 + 3\mathbf{p}_5 =0 \\
        -2\mathbf{p}_2-\mathbf{p}_3+3\mathbf{p}_5=0
    \end{eqnarray*}
    such that the points $\mathbf{p}_0$, $\mathbf{p}_1$, $\mathbf{p}_2$, $\mathbf{p}_3$, $\mathbf{p}_4$ and $\mathbf{p}_5$ all lie on a line. For simplicity, we pick the points such that they all lie on the line $y=1$. We can pick the points $\mathbf{p}_5$, $\mathbf{p}_4$ and $\mathbf{p}_3$ arbitrarily on the line. The coordinates of the points $\mathbf{p}_2$, $\mathbf{p}_1$ and $\mathbf{p}_0$ can be computed from the first three points. One example of points that satisfy the system of equations is $\mathbf{p}_0=(-2:1:1)$, $\mathbf{p}_1=(8:1:1)$, $\mathbf{p}_2=(3:1:1)$, $\mathbf{p}_3=(0:1:1)$, $\mathbf{p}_4=(-1:1:1)$ and $\mathbf{p}_5=(2:1:1)$. With these coordinates, the complete quadrilateral has a single self-stress.
\end{example}

A \textit{weaving} of lines in the plane is a directed graph $G=(V,E)$ together with an assignment of a line in the projective plane that does not go through the origin to each vertex, such that the lines $\mathbf{l}_i$ and $\mathbf{l}_j$ meet in a finite point whenever $(i,j) \in E$. Let $(x_{ij},y_{ij},1)$ be the intersection point of $\mathbf{l}_i$ and $\mathbf{l}_j$. A \textit{self-stress} of a weaving is an assignment of a scalar $s_{ij}$ to each edge $(i,j) \in E$, such that $s_{ij}=-s_{ji}$ and

\begin{equation}
\sum_{j: (i,j) \in E}s_{ij}(x_{ij},y_{ij},1) = \mathbf{0}.
\label{stress_weaving}
\end{equation}

In Example \ref{quadrilateral_ex}, we find that a dependence in the projective matrix includes a check that the scalars, restricted to the columns for the lines, form a self-stress for the weaving of the lines. See~\cite{Weaving,Weaving1}. The converse does not hold.

\section{Symmetric projective configurations}\label{sec:sym}

Many examples of projective configurations exhibit symmetry. There are several reasons for this; symmetry implies beauty, and usually the symmetry makes it easier to construct the configuration. 
For example, the $21_4$ Gr\"unbaum-Rigby configuration, whose description in 1990 \cite{GruRig1990} began the modern study of configurations, is typically drawn with 7-fold dihedral symmetry. Many papers  have constructed interesting examples of configurations (for example, \cite{BobPis2003,BerDeOFau2020,berman,berman2,berman3,BerFau2013}, and \cite{Gru2009b} has many other examples), including those shown in Figures \ref{Mr10} and \ref{Mr20},  by leveraging symmetry and geometry to prove that the necessary incidences occur. 

In this section, we will consider the effect of symmetry on projective rigidity. We will develop a projective orbit rigidity matrix, which is analogous to the orbit rigidity matrix introduced by B.Schulze and W.Whiteley for studying forced symmetric infinitesimal rigidity of bar-joint frameworks in Euclidean space  \cite{SW11}. In this context, the projective orbit rigidity matrix will allow us to study the space of realizations of an incidence geometry, subject to a given symmetry.

\subsection{Symmetries, dualities and polarities of projective space}

The projective general group $PGL(3,\mathbb{R})$ is the group of real invertible $3\times 3$-matrices modulo multiplication with a scalar, and by the fundamental theorem of projective geometry it acts upon the real projective plane as its group of symmetries (its collineation group), because $\mathbb{R}$ has only trivial field automorphisms. 

Any non-degenerate quadratic form $Q$ on $\mathbb{R}^3$ defines a polarity $\pi_Q:\mathbb{R}^3\rightarrow(\mathbb{R}^3)^*$ that sends $v$ to $vQ$.
This induces a geometric polarity exchanging the points with the lines of the real projective plane. 
The image of a projective point $p$ under a polarity is the polar line of $p$ for the conic defined by the quadratic form. 
The identity matrix defines a quadratic form corresponding to the purely imaginary conic $x^2+y^2+z^2=0$, and the polarity it defines maps a point to the line with the same homogeneous coordinates as the point. 

The correlation group of the real projective plane is the group $PGL(3,\mathbb{R})$ extended with a polarity. Given one polarity, the other polarities can be obtained by composing with a projective transformation. For example, the polarity defined by the identity matrix $Q_1=\textup{Id}$, corresponding to the imaginary conic, can be composed with the projective transformation with matrix representative in $GL(3,\mathbb{R})$ equal to $$T=\begin{pmatrix}1&0&0\\0&1&0\\0&0&-1\end{pmatrix}$$ to obtain the polarity defined by the matrix $$Q_2=\begin{pmatrix}1&0&0\\0&1&0\\0&0&-1\end{pmatrix}$$ corresponding to the conic with equation $x^2+y^2=z^2$, because $Q_2=Q_1T$. 

The orthogonal group $O(3)<PGL(3,\mathbb{R})$ acts upon $\mathbb{R}^3$. The special orthogonal group $SO(3)$ is the subgroup of $O(3)$ consisting of the direct orthogonal isometries.  
The projective orthogonal group $PO(3)$ and the projective special orthogonal group $PSO(3)$ are the corresponding subgroups of $PGL(3,\mathbb{R})$ and describe the induced actions upon the real projective plane. In odd dimension $n$, $PO(n)=PSO(n)\cong SO(n)$. For $n=3$ one can find a very nice description of the finite subgroups of this group in \cite{Octon}. 

A planar finite projective configuration consists of a finite subset of points and lines of the projective plane. The symmetry group of the configuration is a finite subgroup of $PGL(3,\mathbb{R})$ that preserves the point set, the line set, and the incidences. 
A polarity of the configuration is an involutory correlation that preserves the incidences of the configuration but permutes the point and the line sets. A configuration that has a non-trivial polarity is called autopolar. See Figure~\ref{autopolar} for an example of an autopolar configuration. 

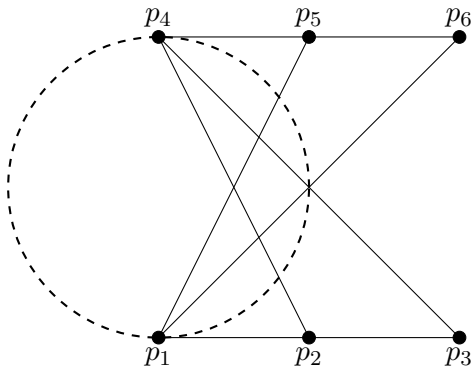
\begin{figure}[htp]
    \centering
    \begin{tikzpicture}[x=2cm,y=2cm]
    \filldraw[black] (0,-1) circle (2.4pt);
    \filldraw[black] (0,1) circle (2.4pt);
    \filldraw[black] (1,-1) circle (2.4pt);
    \filldraw[black] (2,-1) circle (2.4pt);
    \filldraw[black] (1,1) circle (2.4pt);
    \filldraw[black] (2,1) circle (2.4pt);
    \draw[thick, dashed] (0,0) circle (1);
    %\draw[dashed] (0,-1)--(0,3);
    \draw (0,-1)--(1,1);
    \draw (0,-1)--(2,1);
    \draw (0,1)--(1,-1);
    \draw (0,1)--(2,-1);
    \draw (0,-1)--(2,-1);
    \draw (0,1)--(2,1);

    \node[anchor=south] at (0,1) {$p_4$};
    \node[anchor=south] at (1,1) {$p_5$};
    \node[anchor=south] at (2,1) {$p_6$};
    \node[anchor=north] at (0,-1) {$p_1$};
    \node[anchor=north] at (1,-1) {$p_2$};
    \node[anchor=north] at (2,-1) {$p_3$};
    \end{tikzpicture}
    \caption{An autopolar configuration: the polarity $\pi_C$ sends the configuration  to itself, i.e. it sends the points of the configuration to the lines of the configuration and vice versa.}
    \label{autopolar}
\end{figure}
The group consisting of projective correlations (symmetries and polarities) preserving the configuration is called the correlation group of the configuration. 

\subsection{Projective orbit rigidity matrix}

In this section we will establish a symmetry-adapted rigidity matrix, called the projective orbit rigidity matrix, whose kernel consists of the infinitesimal motions that exhibit the same symmetry as the configuration. 
Hence the orbit rigidity matrix can be used to show the existence of symmetry-preserving deformations of symmetric projective configurations. 

Throughout this section, we will let $\Gamma$ denote a subgroup of the group of correlations of a projective configuration such that each element of $\Gamma$ is either an element of $PO(3)$ or a polarity defined by an orthogonal matrix.
We say that an infinitesimal projective motion of the  configuration is ``$\Gamma$-symmetric'' if it also has symmetry $\Gamma$. 

More formally, let $m$ be an element of the kernel of the projective rigidity matrix and let $\gamma$ be an element of $\Gamma$. The vector $m$ has two entries for each point of the configuration, and two entries for each line of the configuration. Let $m(p)$ denote the vector consisting of the two elements corresponding to the point $p$. We can then require that $m(\gamma p)=\gamma m(p)$ for all points $p$ of the configuration.

Similarly, we require that $ m(\gamma l)=\gamma m(l)$ for all lines of the configuration, where $m(l)$ denotes the vector consisting of the two entries of $m$ corresponding to a line $l$ of the configuration. An element $m$ of the kernel of the rigidity matrix satisfying these conditions for all points and lines and all elements of  $\Gamma$ is said to be \textit{$\Gamma$-symmetric}.

\subsubsection{Free actions}\label{subsec:freeaction}
Suppose that we have a configuration of points and lines with symmetry group $\Gamma$. For simplicity, we will first assume that no points or lines are fixed by non-trivial elements of $\Gamma$. In this case, the projective orbit rigidity matrix takes on a particularly simple form.

Consider an incidence $(q, \gamma r)$ of the configuration, where $q$ and $r$ are representatives of orbits of points and lines under the action of $\Gamma$, respectively.  Since we assumed that $m$ is $\Gamma$-symmetric, we have that $m(\gamma(r))=\gamma m(r)$, and the equation in the rigidity matrix corresponding to the incidence 
$(q, \gamma r)$ is
\begin{equation}
    q \cdot \gamma m(r) + \gamma r \cdot m(q)=0.
    \label{eq_1}
 \end{equation}
    Since the inner product is invariant under the action of $\Gamma$, (\ref{eq_1}) is equivalent to
\begin{equation}
    \gamma^{-1}q \cdot m(r) + \gamma r \cdot m(q)=0.
    \label{sym_eq}
\end{equation}
or, in matrix notation, to
\begin{equation}
    (\gamma^{-1}q)^T  m(r) + (\gamma r)^T m(q)=0.
    \label{sym_eq1}
\end{equation}
    For an incidence geometry $S$ and a configuration of points and lines $(S, \mathbf{p}, \mathbf{l})$ realizing $S$, let $M^\Gamma(S,\mathbf{p}, \mathbf{l})$ be the coefficient matrix of the system of equations obtained by considering one equation of the form (\ref{sym_eq}) for each orbit of incidences.
    The row of $M^\Gamma(S,\mathbf{p}, \mathbf{l})$ corresponding to an incidence $(q, \gamma r)$, where $q \neq r$, looks as follows:
    \[
\begin{array}{|c|c|c|c|c|}
    \hline
   & r& &q &\\
  \hline
  0 \dots 0 & (\gamma^{-1}q)^T & 0 \dots 0 & (\gamma r)^T & 0\\
  \hline
 \end{array}
 \]
    A row of $M^\Gamma(S,\mathbf{p}, \mathbf{l})$ corresponding to an incidence $(q, \gamma q)$ looks as follows:

    \[
\begin{array}{|c|c|c|}
    \hline
   & q& \\
  \hline
  0 \dots 0 & (\gamma q + \gamma^{-1}q)^T& 0 \dots 0\\
  \hline
 \end{array}
 \]
 Note that such an incidence can occur if $\gamma$ is a polarity, because in that case,  the image of a point under $\gamma$ is a line.

We say that $\bar m \in \mathbb{R}^{(2|P|+2|L|)/|\Gamma|}$ is the \emph{restriction} of a 
%symmetric motion 
$\Gamma$-symmetric infinitesimal projective motion $m$ if $m(\gamma q)=\gamma \bar m(q)$ for all $\gamma\in \Gamma$, and all $q\in P\cup L$. The elements of the kernel of $M^\Gamma(S,\mathbf{p}, \mathbf{l})$ are in one-to-one correspondence with the %symmetry-preserving motions 
$\Gamma$-symmetric infinitesimal projective motions, as the following result shows.

    \begin{theorem}
    Let $(S, \mathbf{p}, \mathbf{l})$ be a configuration of points and lines with symmetry group $\Gamma$. Then $m$ is an element of the kernel of $M^\Gamma(S,\mathbf{p}, \mathbf{l})$ if and only if it is the restriction of a $\Gamma$-symmetric infinitesimal projective motion of the configuration.
   % \label{ker}
    \end{theorem}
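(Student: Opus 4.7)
The plan is to prove the two implications by pairing, row by row, the full rigidity matrix equation for a chosen orbit representative incidence with the corresponding orbit row of $M^\Gamma(S,\mathbf{p},\mathbf{l})$; the two are translated into one another via the $\Gamma$-symmetry relation $m(\gamma q)=\gamma\bar m(q)$. The key algebraic ingredient is the identity $(\gamma u)\cdot v=u\cdot(\gamma^{-1}v)$ for every $\gamma\in\Gamma$, which holds because every element of $\Gamma$ is represented, in coordinates, by an orthogonal matrix. This is exactly the identity already used to derive Equation~(\ref{sym_eq}) from Equation~(\ref{eq_1}) in the definition of $M^\Gamma$.

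For the forward direction, I would begin with a $\Gamma$-symmetric infinitesimal motion $m$ with restriction $\bar m$, and fix orbit representatives so that every incidence of the configuration lies in the $\Gamma$-orbit of a representative of the form $(q,\gamma r)$, with $q,r$ the chosen representatives and $\gamma\in\Gamma$. The row of the full projective rigidity matrix for $(q,\gamma r)$ reads
\[
q\cdot m(\gamma r)+(\gamma r)\cdot m(q)=0.
\]
Substituting $m(\gamma r)=\gamma\bar m(r)$ and $m(q)=\bar m(q)$, and moving $\gamma$ across the first inner product via the identity above, yields
\[
(\gamma^{-1}q)\cdot\bar m(r)+(\gamma r)\cdot\bar m(q)=0,
\]
which is exactly the orbit row~(\ref{sym_eq}). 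Hence $\bar m\in\ker M^\Gamma(S,\mathbf{p},\mathbf{l})$.

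For the backward direction, start with $\bar m\in\ker M^\Gamma(S,\mathbf{p},\mathbf{l})$ and extend it to an assignment $m$ on all configuration elements by setting $m(\gamma q):=\gamma\bar m(q)$ for each representative $q$ and each $\gamma\in\Gamma$. Under the free-action hypothesis of Section~\ref{subsec:freeaction}, every element of $P\cup L$ has a unique expression $\gamma q$, so $m$ is well-defined, and the definition immediately makes $m$ a $\Gamma$-symmetric assignment, since $m(\delta\gamma q)=(\delta\gamma)\bar m(q)=\delta\,m(\gamma q)$. To verify that $m$ lies in $\ker M(S,\mathbf{p},\mathbf{l})$, write an arbitrary incidence as $(\delta q,\delta\gamma r)$ for a chosen representative $(q,\gamma r)$; substituting the definition of $m$ and using $(\delta u)\cdot(\delta v)=u\cdot v$ reduces the rigidity equation for $(\delta q,\delta\gamma r)$ to the orbit equation for $(q,\gamma r)$, which holds by assumption.

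The main obstacle I expect is projective normalization: the coordinate vector $\gamma q$ computed by matrix multiplication need not agree with the normalized representative (last coordinate $1$) of the corresponding configuration element used to define the rigidity matrix, so the rule $m(\gamma q):=\gamma\bar m(q)$ must be interpreted after an appropriate scalar rescaling, and one must check that both sides of the rigidity equation transform by matching scalars so that the homogeneous identity is preserved. A parallel verification is needed when $\gamma\in\Gamma$ is a polarity rather than a projective symmetry: in that case $\gamma$ interchanges points with lines, but since the orthogonality identity and the orbit row~(\ref{sym_eq}) treat this case symmetrically, the argument goes through essentially unchanged with points and lines interchanged in the appropriate orbits.
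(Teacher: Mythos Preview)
Your proposal is correct and follows essentially the same approach as the paper's proof: both directions hinge on the equivalence of the orbit equation~(\ref{sym_eq}) with every full-matrix equation in the same incidence orbit, obtained by moving $\gamma$ (or $\beta$) across the inner product via orthogonality, together with the extension/restriction rule $m(\gamma q)=\gamma\bar m(q)$. Your write-up is in fact slightly more explicit than the paper's---you spell out well-definedness of the extension under the free-action hypothesis and flag the normalization and polarity issues---but the underlying argument is the same.
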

    \begin{proof}
   Equation~(\ref{sym_eq}) is equivalent to all the equations coming from the incidences in the same orbit as 
   $(q, \gamma r)$.
   That is, if $(\beta q, \beta \gamma r)$ is such an incidence, then the equation coming from the incidence $(\beta q, \beta \gamma r)$ is
    \begin{equation}
    \begin{split}
    \beta q \cdot \beta \gamma \bar m(r) + \beta \gamma r \cdot \beta \bar m(q)=0.
    \end{split}
    \label{M_rep}
    \end{equation}
    which is equivalent to Equation~(\ref{sym_eq}).
    If $m$ is a $\Gamma$-symmetric infinitesimal  motion, then all equations of the form (\ref{M_rep}) are satisfied, so the restriction of $m$ satisfies Equation~(\ref{sym_eq}).
    Similarly, if $\bar m$ is an element of the kernel of $M^{\Gamma}_2(S,\mathbf{p})$, then the vector $m$ defined by $m(\gamma q)=\gamma \bar m(q)$ satisfies all equations of the form (\ref{eq_1}). Hence $m$ is a $\Gamma$-symmetric infinitesimal  motion with restriction $\bar m$.
    \end{proof}

\subsubsection{Actions with fixed points, lines, and incidences}

    Suppose that we have a symmetric configuration of points and lines with symmetry group $\Gamma$ such that the action of $\Gamma$ has some  point or line that is fixed by a non-trivial element of $\Gamma$. 
    
    To set up the correct system of equations for non-trivial elements $\gamma\in \Gamma$ that fix points, we note that if there is a point $q$ that is fixed by $\gamma$, then in any symmetry-preserving motion of the configuration the point $q$ should remain in the subspace of the projective plane that is fixed by the action of $\gamma$.

Let $F_{\gamma}$ be the subspace of the projective plane that is fixed by the action of  $\gamma \in \Gamma$. For a point $q$ of the configuration, we define the subspace
$$
U_q=\bigcap_{\gamma \in \Gamma, \gamma  q = q}F_\gamma.
$$
Note that if $q$ is only fixed by the identity, then $U_q$ is all of the projective plane. For each point $q$ of the configuration, pick a basis $B$ of $U_{q}$. Let $M_{q}$ be the matrix with columns given by $B$.

Similarly, we can define the subspace
$$
U_l=\bigcap_{\gamma \in \Gamma, \gamma l= l}F_\gamma
$$
for each line $l$ of the configuration.
For each line $l$ of the configuration, let $M_l$ be a basis matrix of $U_l$.

In the following, we let $m(r)=M_r\hat{m}(r)$ for a $\textrm{dim}(U_r)$-dimensional column vector $\hat m(r)$.
Similar to the free action case, for each incidence $(q, \gamma r)$ of the configuration, we consider the equation 
\begin{equation}q \cdot \gamma M_r \hat m(r) + \gamma r \cdot M_q \hat m(q)=0 \label{sym_fix0}\end{equation}
which, by the orthogonality of $\gamma$, is equivalent to 
\begin{equation} \gamma^{-1} q\cdot  M_r\hat m(r) + \gamma r \cdot M_q \hat m(q)=0 \label{sym_fix}\end{equation}
or, in matrix notation, to
\begin{equation} (\gamma^{-1} q)^T M_r\hat m(r) + (\gamma r)^T M_q \hat m(q)=0 \label{sym_fix2}\end{equation}
Let $M^\Gamma(S,\mathbf{p}, \mathbf{l})$ be the coefficient matrix of the system of equations where each incidence orbit is represented by an equation of the form (\ref{sym_fix}). The matrix $M^\Gamma(S,\mathbf{p}, \mathbf{l})$ has a row for each orbit of incidences under the action of $\Gamma$, and $\dim(U_r)$ columns for each orbit of points and lines $r$ under the action of $\Gamma$. Explicitly, the row corresponding to the incidence orbit represented by the incidence $(q,\gamma r)$ has the form:
  \[
\begin{array}{|c|c|c|c|c|}
    \hline
   & r& &q &\\
  \hline
  0 \dots 0 & (\gamma^{-1}q)^TM_r & 0 \dots 0 & (\gamma r)^TM_q & 0\\
  \hline
 \end{array}
 \]
Note that a point or line cannot be fixed by a polarity $\gamma$, and hence $r=q$ implies that $M_q$ is the identity matrix. So then we obtain the row for $(q,\gamma q)$ given in Section~\ref{subsec:freeaction}.

 We will see that the kernel of $M^\Gamma(S,\mathbf{p}, \mathbf{l})$ consists of the $\Gamma$-symmetric infinitesimal projective motions.

\begin{theorem}
    Let $(S, \mathbf{p}, \mathbf{l})$ be a $\Gamma$-symmetric configuration of points and lines. Then $\hat m$ is an element of the kernel of $M^\Gamma(S,\mathbf{p}, \mathbf{l})$ if and only if $\bar m$ defined by $\bar m(r)=M_r\hat{m}(r)$ for each representative $r$ of the point and line orbits under the action of $\Gamma$ is the restriction of a $\Gamma$-symmetric infinitesimal projective motion of $S$. 
\end{theorem}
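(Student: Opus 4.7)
The plan is to mirror the free-action proof, with two additional ingredients: verifying that any $\Gamma$-symmetric motion automatically takes values in the fixed subspaces $U_r$ at stabilized points and lines (so the parametrization by $\hat m$ is intrinsic, not a choice), and verifying that the lift back from $\hat m$ to a full motion $m$ is well-defined across the choice of coset representatives.

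First I would carry out the \emph{restriction step}. Given a $\Gamma$-symmetric infinitesimal motion $m$ and any $r \in P\cup L$, for every $\gamma$ with $\gamma r = r$ the defining condition $m(\gamma r) = \gamma m(r)$ collapses to $m(r) = \gamma m(r)$, so $m(r) \in F_\gamma$. Intersecting over the stabilizer of $r$ in $\Gamma$ gives $m(r) \in U_r$, and hence $m(r) = M_r \hat m(r)$ for a unique coordinate vector $\hat m(r)$ with respect to the chosen basis encoded by $M_r$. This confirms that the restriction of every $\Gamma$-symmetric motion genuinely factors through the $\hat m$-space, and it identifies which tuples $\bar m$ can possibly arise as restrictions.

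Next I would verify that the single equation (\ref{sym_fix}) attached to a representative incidence $(q, \gamma r)$ is equivalent to the full family of rigidity equations at the incidences in its $\Gamma$-orbit. Explicitly, the incidence $(\beta q, \beta \gamma r)$ for $\beta \in \Gamma$ yields
\begin{equation*}
 \beta q \cdot \beta \gamma M_r \hat m(r) + \beta \gamma r \cdot \beta M_q \hat m(q) = 0,
\end{equation*}
and since each element of $\Gamma$ is represented by an orthogonal matrix (whether it lies in $PO(3)$ or is a polarity defined by an orthogonal matrix), $\beta u \cdot \beta v = u \cdot v$, so the displayed equation reduces to (\ref{sym_fix0}) and thence to (\ref{sym_fix}). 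The ``if'' direction follows immediately: if $\bar m$ is the restriction of a $\Gamma$-symmetric motion, then the original rigidity equations hold at every incidence, so in particular at orbit representatives, meaning $\hat m$ is in the kernel of $M^\Gamma(S,\mathbf{p},\mathbf{l})$.

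For the ``only if'' direction I would, given $\hat m$ in the kernel, define $m(\gamma q) := \gamma M_q \hat m(q)$ for every orbit representative $q \in P\cup L$ and every $\gamma \in \Gamma$. The main obstacle here is well-definedness: if $\gamma q = \gamma' q$, then $\gamma^{-1}\gamma'$ lies in the stabilizer of $q$ and therefore fixes every vector of $U_q$ pointwise by construction of $U_q$; applied to $M_q \hat m(q) \in U_q$, this yields $\gamma M_q \hat m(q) = \gamma' M_q \hat m(q)$, so the assignment is unambiguous. Once well-defined, $m$ is manifestly $\Gamma$-symmetric, and every original incidence equation follows from the orbit equation encoded in the corresponding row of $M^\Gamma$, by the equivalence established in the previous step. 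Hence $m$ is a $\Gamma$-symmetric infinitesimal projective motion whose restriction is $\bar m$, completing the equivalence.
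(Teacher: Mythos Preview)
Your proof is correct and follows the same core argument as the paper: the orthogonality of the elements of $\Gamma$ makes the single orbit equation~(\ref{sym_fix}) equivalent to the full family of rigidity equations at all incidences in that orbit, so kernel membership of $\hat m$ is equivalent to $\bar m$ being the restriction of a $\Gamma$-symmetric motion.

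Where you differ from the paper is in rigor rather than strategy. The paper's proof simply asserts the equivalence of~(\ref{sym_fix}), (\ref{sym_fix0}), and~(\ref{M_rep_eq}) and concludes. You add two well-definedness checks that the paper leaves implicit: first, that any $\Gamma$-symmetric motion $m$ automatically satisfies $m(r)\in U_r$ at stabilized $r$ (so the parametrization $m(r)=M_r\hat m(r)$ is forced, not optional); second, that the lift $m(\gamma q):=\gamma M_q\hat m(q)$ is unambiguous when $\gamma q=\gamma' q$, because the stabilizer fixes $U_q$ pointwise. These are genuine gaps in the paper's presentation, and your treatment of them is the cleaner argument.
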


\begin{proof}
    Suppose that $(q,\gamma r) \in I$ is a representative of an orbit of incidences.
    The equation in $M^\Gamma(S,\mathbf{p}, \mathbf{l})$ corresponding to $(q,\gamma r)$ is Equation~(\ref{sym_fix}).

Since the action of $\Gamma$ preserves inner products, Equation~(\ref{sym_fix}) is equivalent to
Equation~(\ref{sym_fix0}), which is the equation corresponding to the incidence $(q, \gamma r)$ in $M(S, \mathbf{p}, \mathbf{l})$.

Consider another incidence $(\beta q, \beta \gamma r)$ in the same orbit. Since the inner product is invariant under the action of $\Gamma$, Equation~(\ref{sym_fix0}) is also equivalent to
    \begin{equation}
    \begin{split}
    \beta q \cdot \beta \gamma M_r \hat m(r) + \beta \gamma r \cdot \beta M_q \hat m(q) =0.
        \end{split}
    \label{M_rep_eq}
\end{equation}

Equation (\ref{M_rep_eq}) is the equation corresponding to the incidence 
$(\beta q, \beta \gamma r)$ in $M(S, \mathbf{p}, \mathbf{l})$. Since Equation~(\ref{sym_fix0}) and Equation~(\ref{M_rep_eq}) are both equivalent to Equation~(\ref{sym_fix}), the theorem follows, since Equation~(\ref{sym_fix}) is satisfied if and only if $\hat m$ is an element of the kernel of $M^\Gamma(S,\mathbf{p},\mathbf{l})$, and Equation~(\ref{sym_fix0}) and Equation~(\ref{M_rep_eq}) are the equations that need to be satisfied for $\bar m$ to be the restriction of a $\Gamma$-symmetric projective motion.
\end{proof}

We conclude this section with two examples. 

First, we use the orbit rigidity matrix to analyze the configuration in Figure \ref{Symmetric_w._motions}. It has 13 points, 12 lines and 42 incidences. So its rigidity matrix has $2\times 13+2\times 12=50$ columns and $42$ rows, and hence the configuration would be projectively rigid if the rows were independent.  
The configuration in Figure \ref{Symmetric_w._motions} however, has two non-trivial  infinitesimal motions. 

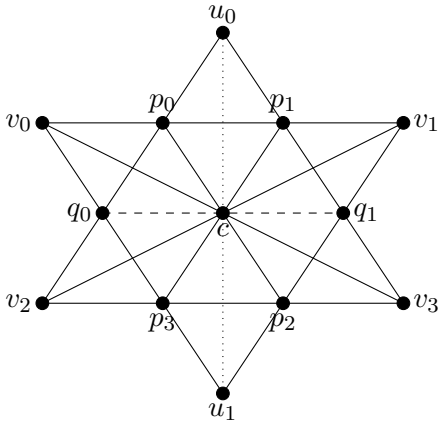
\begin{figure}[htp]
    \centering
    \begin{tikzpicture}[x=1.2cm,y=1.2cm]
    \filldraw[black] (-4,-5) circle (2.4pt);
    \filldraw[black] (-2,-4) circle (2.4pt);
    \filldraw[black] (-6,-4) circle (2.4pt);
    \filldraw[black] (-4,-1) circle (2.4pt);
    \filldraw[black] (-2,-2) circle (2.4pt);
    \filldraw[black] (-6,-2) circle (2.4pt);
    \filldraw[black] (-16/3,-3) circle (2.4pt);
    \filldraw[black] (-4,-3) circle (2.4pt);
    \filldraw[black] (-8/3,-3) circle (2.4pt);
    \filldraw[black] (-14/3,-2) circle (2.4pt);
    \filldraw[black] (-14/3,-4) circle (2.4pt);
    \filldraw[black] (-10/3,-4) circle (2.4pt);
    \filldraw[black] (-10/3,-2) circle (2.4pt);

    \draw (-6,-4)--(-4,-1);
    \draw (-6,-4)--(-2,-2);
    \draw (-4,-5)--(-6,-2);
    \draw (-4,-5)--(-2,-2);
    \draw (-2,-4)--(-4,-1);
    \draw (-2,-4)--(-6,-2);
    \draw[dashed] (-16/3,-3)--(-8/3,-3);
    \draw (-2,-4)--(-6,-4);
    \draw (-2,-2)--(-6,-2);
    \draw (-14/3,-4)--(-10/3,-2);
    \draw (-10/3,-4)--(-14/3,-2);
    \draw[dotted] (-4,-1)--(-4,-5);

    \node[anchor=south] at (-14/3,-2) {$p_0$};
    \node[anchor=south] at (-10/3,-2) {$p_1$};
    \node[anchor=north] at (-10/3,-4) {$p_2$};
    \node[anchor=north] at (-14/3,-4) {$p_3$};
    \node[anchor=east] at (-16/3,-3) {$q_0$};
    \node[anchor=west] at (-8/3,-3) {$q_1$};
    \node[anchor=east] at (-6,-2) {$v_0$};
    \node[anchor=west] at (-2,-2) {$v_1$};
    \node[anchor=east] at (-6,-4) {$v_2$};
    \node[anchor=west] at (-2,-4) {$v_3$};
    %\node[anchor=south] at (-4,-3) {$c$};
    \node[anchor=south] at (-4,-1) {$u_0$};
     \node[anchor=north] at (-4,-5) {$u_1$};
     \node[anchor=north] at (-4,-3) {$c$};
\end{tikzpicture}
    \caption{A  configuration with dihedral symmetry $D_4$.}
    \label{Symmetric_w._motions}
\end{figure}

The configuration can be constructed with dihedral symmetry $D_4$ of order 4 as in Figure \ref{Symmetric_w._motions} using reflections in the two perpendicular lines (dashed and dotted) by first picking two points $q_0$ and $q_1$ on the dashed line. The dotted line is perpendicular to the dashed line, and passes through the midpoint of  $q_0$ and $q_1$.
Now, pick a point $p_0$ not on the symmetry lines, and reflect it in the dashed and dotted lines to get the points $p_1$, $p_2$ and $p_3$. The points $u_0$, $u_1$, $v_0$, $v_1$, $v_2$ and $v_3$ are the points of intersection of the lines $q_0p_0$ and $q_1p_1$, $q_0p_3$ and $q_1p_2$, $q_0p_3$ and $p_0p_1$, $q_1p_2$ and $p_0p_1$, $p_0q_0$ and $p_2p_3$ and $p_1q_1$ and $p_2p_3$ respectively. The point $c$ is the center of rotation.

There are some projective theorems involved in this configuration. Firstly, the line incident to $q_0$, $c$ and $q_1$ is the Pascal line of the hexagon given by the lines $v_0u_1$, $u_1v_1$, $v_1v_2$, $v_2u_0$, $u_0v_3$ and $v_0v_3$. By Pascal's theorem, a Pascal line exists if the points $v_0$, $u_0$, $v_1$, $v_3$, $u_1$ and $v_2$ lie on a conic. Secondly, by Brianchon's theorem the diagonals of the hexagon with edges $u_0v_1$, $v_1v_3$, $v_3u_1$, $u_1v_2$, $v_2v_2$ and $v_0u_0$ meet at a point if and only if the edges of the hexagon are tangent to a conic. Also by Brianchon's theorem, the diagonals of the hexagon with the edges $p_0p_1$, $p_1q_1$, $q_1p_2$, $p_2p_3$, $p_3q_0$ and $q_0p_0$ meet at a point if and only if the edges of the hexagon are tangent to a conic.

The condition of Pascal's theorem and the conditions of the two instances of Brianchon's theorem are necessarily satisfied in any realization of the configuration. Since the configuration can be constructed with $D_4$ symmetry, these symmetries must imply that the conditions of the theorems are satisfied in the $D_4$-symmetric configuration. However, there are realizations of the configuration in Figure \ref{Symmetric_w._motions} that  have neither of the reflectional symmetries nor the half-turn symmetry. In general, it is always possible to find projectively equivalent realizations of a symmetric configuration without Euclidean symmetries by applying a trivial projective motion that does not preserve the symmetry. However, for this example, we will see that every realization obtained by applying a non-trivial projective motion to the $D_4$-symmetric configuration will be projectively equivalent to a realization with reflectional symmetry.

The configuration has a two-dimensional space of non-trivial infinitesimal motions. Using the orbit rigidity matrix, one can show that there is a six-dimensional space of reflection-symmetric infinitesimal motions with respect to the dashed line. One can also show that there is a six-dimensional space of reflection-symmetric infinitesimal motions with respect to the dotted line. In both cases, there is a four-dimensional space of trivial infinitesimal motions, so for each of the two reflections, there is a two-dimensional space of non-trivial reflection-symmetric infinitesimal motions with respect to that reflection.

We can also set up an orbit rigidity matrix to see that there is a three-dimensional space of $D_4$-symmetric infinitesimal motions. The space of trivial  $D_4$-symmetric infinitesimal motions is two-dimensional, leaving one non-trivial $D_4$-symmetric motion. 

    Consequently, for each choice of reflection, there must be one non-trivial reflection-symmetric infinitesimal motion for that reflection, but not the other. 
    It can be verified that these infinitesimal motions extend to finite continuous motions, and hence for each reflection symmetry (in the dashed or dotted line) there is a realization of the configuration in Figure \ref{Symmetric_w._motions} that has this symmetry, which is not projectively equivalent to the configuration in Figure \ref{Symmetric_w._motions}.  
    However, the configurations with a single reflectional symmetry obtained in this way will be projectively equivalent to each other.

    As all non-trivial projective motions preserve one of the reflectional symmetries, all configurations obtained from the configuration in Figure \ref{Symmetric_w._motions} by applying a non-trivial motion will be projectively equivalent to a realization with reflectional symmetry. It is therefore not clear that there \textit{is} a realization of the incidence structure in Figure \ref{Symmetric_w._motions} that is not projectively equivalent to a reflection-symmetric configuration. 
    
    Is there, for example, a non-trivial realization of the incidence structure in Figure \ref{Symmetric_w._motions} such that the rigidity matrix has full rank equal to $42$? Or, is there  a realization of the same incidence structure with only one non-trivial infinitesimal motion? Such realizations would not be projectively equivalent to the realization in Figure \ref{Symmetric_w._motions}.

\begin{figure}[htp]
    \centering
    \begin{tikzpicture}[x=2cm,y=2cm]
    \filldraw[black] (0,-1) circle (2.4pt);
    \filldraw[black] (0,1) circle (2.4pt);
    \filldraw[black] (1,-1) circle (2.4pt);
    %\filldraw[gray] (-1,2) circle (2pt);
    %\filldraw[gray] (1,1) circle (2pt);
    \filldraw[black] (2,1) circle (2.4pt);
    \filldraw[black] (3,-1) circle (2.4pt);
    \filldraw[black] (2/3,1) circle (2.4pt);
    \draw[thick, dashed] (0,0) circle (1);
    %\draw[dashed] (0,-1)--(0,3);
    %\draw[gray] (-1,0)--(1,1);
    \draw (0,-1)--(2,1);
    \draw (0,1)--(1,-1);
    %\draw[gray] (1,0)--(-1,2);
    \draw (0,-1)--(2/3,1);
    \draw (0,1)--(3,-1);
    \draw (0,-1)--(2,-1);
    \draw (0,1)--(2,1);
    \draw[thick, red] (2,-1)--(3,-1);
    \draw[->,thick, red] (2,-1)--(2.925,-1);
    \draw[thick, red] (1,1)--(2/3,1);
    \draw[->,thick, red] (1,1)--(9/12,1);

    \node[anchor=south] at (0,1) {$p_4$};
    \node[anchor=south] at (1,1) {$p_5$};
    \node[anchor=south] at (2,1) {$p_6$};
    \node[anchor=north] at (0,-1) {$p_1$};
    \node[anchor=north] at (1,-1) {$p_2$};
    \node[anchor=north] at (2,-1) {$p_3$};
    \end{tikzpicture}
    \caption{A non-trivial motion of the configuration in Figure~\ref{autopolar} preserving autopolarity.}
    \label{autopolar_motion}
\end{figure}
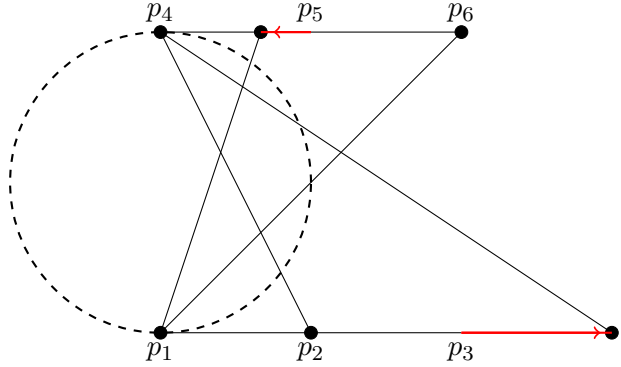

Finally, we revisit  the configuration in Figure \ref{autopolar} and analyze it using the orbit rigidity matrix.

The polarity $\pi_C$ maps the point $p_1$, with coordinates $(0,-1)$, to the line with homogeneous coordinates $(0:1:1)$, which is the tangent line to the circle at the point $p_1$. Call this line $L_1$. Similarly the point $p_4$ with coordinates $(0,1)$ is mapped by $\pi_C$ to the line $(0:-1:1)$, which is the tangent line to the circle at the point $p_4$. Call this line $L_4$.
The point $p_2$ with coordinates $(1,-1)$ is mapped by  to the line with homogeneous coordinates $(-1:1:1)$, which is the line through $p_1$ and $p_6$. Call this line $L_2$. Similarly, the point $p_5$, with coordinates $(1,1)$, is polar to the line through $p_3$ and $p_4$, which has homogeneous coordinates $(-1:-1:1)$. Call this line $L_5$.
The point $p_3$, with coordinates $(2,-1)$, is polar to the line through $p_2$ and $p_4$, with homogeneous coordinates $(-2:1:1)$. The point $p_6$, with coordinates $(2,1)$ is polar to the line through $p_1$ and $p_5$, with homogeneous coordinates $(-2:-1:1)$. Call these lines $L_3$ and $L_6$ respectively.

Under the polarity $\pi_C$, there are six orbits $\{p_i, L_i\}$ of points and lines. There are eight orbits of incidences: $i_0=\{(p_1,L_1)\}$,  $i_1=\{(p_1,L_2), (p_2, L_1)\}$, $i_2=\{(p_1,L_3), (p_3, L_1)\}$, $i_3=\{(p_4, L_4)\}$, $i_4=\{(p_4,L_5), (p_5, L_4)\}$,$i_5=\{(p_4, L_6),$\\$ (p_6, L_4)\}$, $i_6=\{(p_2, L_6), (p_6, L_2)\}$ and $i_7=\{(p_3, L_5), (p_5, L_3)\}$.
The orbit rigidity matrix $M_2^{\langle \pi_C \rangle}(S, \mathbf{p})$ is
\medskip

\noindent
\footnotesize
\[
\begin{array}{ccccccccccccccc}
     & &\{p_1, L_1\}&  & \{p_2, L_2\} & & \{p_3,L_3\} & &\{p_4, L_4\} & & \{p_5, L_5\}& & \{p_6, L_6\} \\
   i_0 & \ldelim({8}{0.5em} & 0 &2 &0 &0 &0 &0 &0 &0 &0 &0  &0 &0 &  \rdelim){8}{0.5em} \\
   i_1 && -1 & 1 & 0 & 1 &0 &0 &0 &0 &0 &0 & 0 & 0 &\\
   i_2 && -2 & 1 & 0 & 0& 0 & 1 & 0 & 0 & 0 & 0 & 0 & 0 \\
   i_3 && 0 & 0 & 0 & 0 & 0 &0 & 0 & -2 & 0 & 0 & 0 & 0 \\
   i_4 && 0 &0 & 0 & 0 & 0 & 0 & -1 & -1 & 0 & -1 & 0 & 0 \\
   i_5 && 0 & 0 & 0 & 0 & 0 & 0 & -2 & -1 & 0 & 0 & 0 & -1 \\
   i_6 && 0 & 0 & -2 & -1 & 0 & 0 & 0 & 0 & 0 & 0 & -1 & 1 \\
   i_7 && 0 & 0 & 0 & 0 & -1 & -1 & 0 & 0 & -2 & -1 & 0 & 0
\end{array}
\]
\normalsize

\medskip

This matrix has a four-dimensional kernel, which implies that there is a 4-dimensional space of infinitesimal motions that preserve autopolarity. One of these motions is non-trivial and extends to a  finite motion.

To see this, pin the points $p_1$, $p_2$, $p_4$ and $p_6$. The points $p_1$, $p_2$ and $p_3$ have to remain collinear after any projective motion, so the point $p_3$ can only move along the line $L_1$. Suppose that the point $p_3$ is moved to the point $p_3'=(2+t, -1)$. The line $L_5$ then has to be moved to the line between $p_3'$ and $p_4$ in order for the incidences to be preserved. Similarly, the point $p_5$ can only move along the line $L_4$. Suppose that the point $P_5$ is moved to $p_5'=(1+t',1)$. Choosing $t'=\frac{-t}{2+t}$ preserves autopolarity. Furthermore, $t'=\frac{-t}{2+t}$ is the only possible choice for $t'$ preserving autopolarity.

Hence the only non-trivial projective motion preserving autopolarity is moving $p_3$ to $(2+t,-1)$ and $p_5$ to $(1+\frac{-t}{2+t},1)$, and moving the lines $L_3$ and $L_5$ to preserve incidences. Figure \ref{autopolar_motion} illustrates this motion.

\section{Conclusion and future work}\label{sec:con}
Finally, we mention some possible future research directions. Some of these open problems will be elaborated on in the companion paper \cite{fields}.

\textit{Implications of self-stresses for realizability of incidence geometries.} What are the implications of dependencies among the incidences of an incidence geometry for whether or not the incidence geometry is realizable as points and straight lines in the projective plane? Does non-realizability of an incidence geometry imply that the incidences are dependent, or are there non-realizable incidence geometries with sets of incidences that are independent in the projective rigidity matroid? Note that points or lines coinciding does not necessarily imply that the incidences are dependent.

\textit{Inductive constructions.} 
There are well-known  constructions, known as $0$- and $1$-extensions, that preserve rigidity of bar-joint frameworks in $\mathbb{R}^d$. In the plane, $0$- and $1$-extensions are sufficient for inductively constructing all minimally rigid graphs, starting from a single edge. 

Operations in the Cayley algebra, i.e. adding a line between two points not already connected by a line in the configuration, or adding the intersection point of two lines whose point of intersection is not already in the configuration, preserve independence in the projective rigidity matrix, and can therefore be used to inductively construct projectively rigid configurations \cite{fields}. However, operations in the Cayley algebra are clearly not sufficient for constructing all projectively rigid configurations, so it would be interesting to find more inductive constructions that preserve projective rigidity.

\textit{Generalizations to higher dimension.} Incidence geometries can also be realized in real projective spaces of higher dimension. It would be natural to consider the generalization of projective motions to realizations of incidence geometries and points and hyperplanes in higher-dimensional real projective spaces. Another natural generalization would be to consider realizations of incidence geometries as points and lines in projective spaces of dimension three or higher. Investigating generalizations of projective motions to such realizations is a possible avenue for future research. 

\textit{Families of rigid $v_k$-configurations.} In Example~\ref{Rigid_ex}, we show a projectively rigid $20_4$-configuration. As $v_k$-configurations are overconstrained with respect to the count in \eqref{count_indep} whenever $k \geq 4$, it seems likely that there are more projectively rigid $v_k$-configurations. Constructing families of projectively rigid $v_k$-configurations, or proving that known families of $v_k$-configurations are projectively rigid, is another potential direction for future research.

\section*{Acknowledgements}

Most of our results presented here were obtained during the Focus Program on Geometric Constraint Systems
July 1 - August 31, 2023 and we gratefully acknowledge the productive work environment provided by the Fields Institute for Research in Mathematical Sciences, as well as its financial support. The contents of this article are solely the responsibility of the authors and do not necessarily represent the official views of the Institute.

The work was also supported by the Knut and Alice Wallenberg Foundation Grant 2020.0001 and 2020.0007.
\bibliographystyle{plain}

\bibliography{projective}
\end{document}